\newtheorem{theorem}{Theorem}
\newtheorem*{theorem*}{Theorem}
\newtheorem{lemma}{Lemma}
\newtheorem*{remark*}{Remark}
\begin{document}
\title{\uppercase{A Note on an Asymptotic Expansion related to the Dickman function}}
\author{C. S. Franze \thanks{franze.3@osu.edu}}
\affil{Department of Mathematics,\\ The Ohio State University}
\date{}
\maketitle
\abstract{In this paper we refine an asymptotic expansion given by Soundararajan \cite{Sound} for a family of multiple integrals related to the Dickman function. The result suggests a relatively simple approach to computing these integrals numerically.}

\section{Introduction}

The Dickman function, $\rho(u)$, satisfies the differential equation $\left(u\rho(u)\right)'=-\rho(u-1)$ for $u\ge1$, while $\rho(u)=1$ for $0\le u\le1$. Solving this differential equation one unit interval at a time, we find that
\begin{equation*}
  \rho(u)=\sum_{\ell=0}^{\infty}(-1)^{\ell}K_{\ell}(u),
\end{equation*}
where, for $\ell\ge1$,
\begin{equation*}
K_{\ell}(u):=\frac{1}{\ell!}\idotsint\limits_{\substack{t_{1},\ldots,t_{\ell}\ge1\\t_{1}+\cdots+t_{\ell}\le u}}\frac{dt_{1}}{t_{1}}\cdots\frac{dt_{\ell}}{t_{\ell}},
\end{equation*}
while $K_{0}(u):=1$ for $0\le u\le1$. Note that the sum above is actually finite since $K_{\ell}(u)=0$ if $u\le\ell$.

An equivalent form of this integral decomposition of $\rho(u)$ appears in Ramanujan's unpublished papers, and actually predates Dickman's published account. While the function itself has been thoroughly investigated by many researchers over the years, the integrals, $K_\ell(u)$, appearing above have not received as much attention. For instance, a procedure to numerically calculate $K_\ell(u)$ can be found in the work of Grupp and Richert \cite{Grupp} published in 1986, while a published account of the asymptotic behavior of $K_\ell(u)$ only recently appeared in the work of Soundararajan \cite{Sound} in 2012, in which he proved
\begin{theorem*}[Soundararajan, 2012]\label{MainSound}
For each integer $\ell\ge1$, as $u\rightarrow\infty$,
\begin{equation}\label{SoundEXP}
K_{\ell}(u)=\sum_{r=0}^{\ell}\frac{(-1)^{r}}{(\ell-r)!}C_{r}\log^{\ell-r}u+O_{\ell}\left(\frac{\log^{\ell}u}{u}\right),
\end{equation}
where the constants $C_{r}$ are given by the generating function,
\begin{equation}\label{DickmanGenFunc}
  \sum_{r=0}^{\infty}C_{r}z^{r}=\frac{e^{\gamma z}}{\Gamma(1-z)}.
\end{equation}
\end{theorem*}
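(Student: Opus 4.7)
The plan is to proceed via the Laplace transform. First I would compute
\[
\hat K_\ell(s) := \int_0^\infty K_\ell(u)e^{-su}\,du = \frac{1}{\ell!\,s}\left(\int_1^\infty \frac{e^{-st}}{t}\,dt\right)^\ell = \frac{E_1(s)^\ell}{\ell!\,s},
\]
by applying Fubini to the multiple integral definition of $K_\ell$, where $E_1(s) = \int_s^\infty e^{-t}/t\,dt$ is the exponential integral. The large-$u$ behavior of $K_\ell$ is controlled by the behavior of $\hat K_\ell$ at $s = 0$, which is logarithmic: $E_1(s) = -\log s - \gamma + \Ein(s)$, with $\Ein(s) = \int_0^s(1-e^{-t})/t\,dt$ entire and $\Ein(s) = O(|s|)$ near $0$.

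Next I would invert the Laplace transform via the Bromwich integral and deform the contour to a Hankel contour wrapping the branch cut of $E_1$ along $(-\infty,0]$. The rescaling $s = -w/u$ then yields
\[
K_\ell(u) = \frac{1}{2\pi i\,\ell!}\int_{\mathcal H}\frac{e^{-w}}{w}\bigl(E_1(-w/u)\bigr)^\ell\,dw,
\]
for $\mathcal H$ a Hankel contour in the $w$-plane enclosing the positive real axis. The decomposition $E_1(-w/u) = \log u - \log(-w) - \gamma + \Ein(-w/u)$ cleanly separates the $u$-dependence from a remainder $\Ein(-w/u)$ of size $O(|w|/u)$ on the bounded part of $\mathcal H$.

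Expanding by the binomial theorem,
\[
(E_1(-w/u))^\ell = \sum_{r=0}^\ell \binom{\ell}{r}(\log u)^{\ell-r}\bigl(-\log(-w)-\gamma\bigr)^r + \mathcal R(w,u),
\]
where $\mathcal R$ collects the terms containing at least one factor of $\Ein(-w/u)$. The main-term contribution reduces to evaluating
\[
I_r := \frac{1}{2\pi i}\int_{\mathcal H}\frac{e^{-w}}{w}\bigl(-\log(-w)-\gamma\bigr)^r\,dw.
\]
I would handle these via a single generating function: $\sum_r I_r z^r/r!$ collapses to $e^{-\gamma z}(2\pi i)^{-1}\int_{\mathcal H}e^{-w}(-w)^{-z}/w\,dw$, which after the change of variable $t = -w$ and use of Hankel's representation $1/\Gamma(z+1) = (2\pi i)^{-1}\int_{\mathcal H_-}e^t t^{-z-1}\,dt$ evaluates to $e^{-\gamma z}/\Gamma(1+z)$. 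Comparing with \eqref{DickmanGenFunc} under $z\mapsto -z$ gives $I_r = (-1)^r r!\,C_r$, and substituting back produces exactly the claimed expansion $\sum_{r=0}^\ell(-1)^r C_r (\log u)^{\ell-r}/(\ell-r)!$.

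The main obstacle is the uniform error analysis of $\mathcal R(w,u)$ along $\mathcal H$. On the bounded portion of the contour near $0$, each factor of $\Ein(-w/u)$ contributes $O(|w|/u)$ while $|\log(-w)|$ is bounded, so the remaining logarithmic factors give at most $O((\log u)^{\ell-1})$; on the two rays, the exponential decay of $e^{-w}$ controls the tails uniformly in $u$. The aggregate bound is $O((\log u)^\ell/u)$, matching the theorem. The technical heart of the argument is establishing these bounds uniformly, legitimizing the contour deformation, and justifying the term-by-term integration of the binomial expansion.
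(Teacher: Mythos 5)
Your proposal is correct in substance and arrives at the result by a genuinely valid route, but it differs from the one Soundararajan (and this paper) take in a meaningful presentational respect. You begin from the same place — the Laplace transform $\hat K_\ell(s) = E_1(s)^\ell/(\ell!\,s)$ is exactly the representation \eqref{Contour_Int_Rep} with $\kappa = 0$, after the scaling $s\mapsto s/u$ — and you use the same decomposition $E_1(-w/u) = \log u - \log(-w) - \gamma + \Ein(-w/u)$, where your $\Ein(-w/u)$ is precisely the paper's $G(u,s)$ (see \eqref{G_Def}). You also group terms identically: for $\kappa=0$ only the $m=0$ piece of \eqref{E14} contributes a main term, and your $\mathcal{R}(w,u)$ is exactly the $m\ge 1$ part. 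Where you diverge is the contour. The paper (following Soundararajan) stays on the Bromwich line $\Re s = c>0$ throughout, defines the constants $C_{r,\kappa}$ via vertical-line integrals as in \eqref{E1_begin}, and only afterwards verifies the generating function $e^{\gamma z}/\Gamma(1-z)$ by a separate lemma. You instead deform to a Hankel loop around the branch cut, rescale, and evaluate the exponential generating function of the $I_r$ in one stroke via Hankel's representation of $1/\Gamma$; this collapses the identification $I_r = (-1)^r r!\,C_r$ into a one-line substitution and is arguably the more transparent way to see why \eqref{DickmanGenFunc} appears. The price is that you owe a justification for the contour deformation (the integrand $E_1(s)^\ell e^{su}/s$ has a logarithmic singularity at $s=0$, so the small loop must be handled with care, though the integral does converge), and your error analysis is stated loosely — on the small circle $|\log(-w)|$ is bounded only because the circle has fixed positive radius, and on the finite part of the rays the factor $(\log u + |\log(-w)| + O(1))^{\ell-1}$ needs to be integrated against $e^{-w}\,\Ein(-w/u)\,dw/w$, which is exactly what \cite[Lemma 12]{Franze} makes precise (split at $|s|=u$, use $\Ein = O(|s|/u)$ for $|s|<u$ and $\Ein = O(1+\log(|s|/u))$ for $|s|>u$). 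So: same skeleton, different contour, with your route more direct for the main-term constants and the paper's route more ready-made for the quantitative error bound — and for the iterated differentiation that the paper's Theorem \ref{Main} then requires, which is the reason the Bromwich form is kept.
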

This asymptotic expansion was first conjectured by Broadhurst \cite{Broadhurst} in 2010, where he considered a generalized class of polylogarithms. An expansion of a similiar shape appears in the recent work of Smith \cite{Smith}. The purpose of this paper is to refine the expansion above, providing more terms, and suggesting an alternative method for computing $K_\ell(u)$. Specifically, we prove 
\begin{theorem}\label{Main}
  For each integer $J\ge0$ and $\ell\ge1$, provided $u\ge\ell$,
\begin{equation}\label{FranzeEXP}
  K_{\ell}(u)=\sum_{j=0}^{J}\sum_{m=0}^{\ell}\sum_{r=0}^{\ell-m}\frac{(-1)^{r}}{m!(\ell-m-r)!}E_{j,m}C_{r}\left(\log^{\ell-m-r}u\right)^{(j)}+O_{J,\ell}\left(\frac{\log^{\ell}eu}{u^{J+1}}\right),
\end{equation}
where the constants $C_{r}$ are given above in \eqref{DickmanGenFunc}, and $E_{j,m}$ are given by the generating function
\begin{equation}\label{E_nm_Def}
\sum_{j=0}^{\infty}E_{j,m}z^{j}=\left(\int_{0}^{z}\frac{1-e^{-t}}{t}dt\right)^{m}.
\end{equation}
\end{theorem}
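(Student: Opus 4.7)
The plan is to work from the Laplace-transform identity
$$\mathcal{L}[K_\ell](s) = \frac{E_1(s)^\ell}{\ell!\, s},$$
which follows from the convolution structure $\ell!\, K_\ell(u) = \int_0^u h^{*\ell}(v)\,dv$ for $h(t) = \mathbf{1}_{t\ge1}/t$, combined with the standard evaluation $\mathcal{L}[h](s) = E_1(s) = -\gamma - \log s + \Ein(s)$. This decomposition of $E_1$ cleanly separates a logarithmic part (which will produce the constants $C_r$) from an entire part (which will produce the $E_{j,m}$), so that the two pieces of the expansion can be handled independently.

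Expanding by the binomial theorem and substituting $\Ein(s)^m = \sum_{j\ge0}E_{j,m}\,s^j$ truncated at $j=J$ reduces the main-term computation to proving the key inversion
$$\mathcal{L}^{-1}\bigl[(-\gamma-\log s)^{k}\, s^{j-1}\bigr](u) \;=\; k!\sum_{r=0}^{k}\frac{(-1)^r C_r}{(k-r)!}\bigl(\log^{k-r}u\bigr)^{(j)}.$$
I would derive this from the generating-function identity $\sum_k z^k(-\gamma-\log s)^k/k! = e^{-\gamma z}s^{-z}$, the standard transform $\mathcal L^{-1}[s^{-z+j-1}](u) = u^{z-j}/\Gamma(z-j+1)$, and the factorial relation $\Gamma(z+1)/\Gamma(z-j+1) = (z)_j$, which rewrites $u^{z-j}/\Gamma(z-j+1)$ as $\frac{d^j}{du^j}u^z$ divided by $\Gamma(z+1)$; combining with $e^{-\gamma z}/\Gamma(1+z) = \sum_r(-1)^r C_r z^r$ (obtained from \eqref{DickmanGenFunc} via $z\mapsto -z$) and matching powers of $z$ yields the formula. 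Assembling the pieces against the triple sum $\sum_m\sum_j E_{j,m}/[m!(\ell-m)!]$ reproduces exactly the main terms of \eqref{FranzeEXP}.

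For the error, I would use the Bromwich representation and the substitution $s=w/u$ to write
$$K_\ell(u)=\frac{1}{2\pi i}\int_{\mathcal H}\frac{E_1(w/u)^\ell}{\ell!\,w}e^w\,dw,$$
where $\mathcal{H}$ is a Hankel contour around the branch cut of $\log w$ on the negative real axis. The Taylor remainder $\Ein(w/u)^m - \sum_{j=0}^{J}E_{j,m}(w/u)^j$ is $O(|w|^{J+1}/u^{J+1})$ uniformly for $|w|\le R$ on $\mathcal{H}$, while $(-\gamma-\log(w/u))^{\ell-m} \ll \log^{\ell-m}(eu)$ there; on the two infinite rays $e^{\Re w}=e^{-|w|}$ supplies absolute convergence. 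Summing the resulting bounds over $m\le\ell$ yields the claimed $O_{J,\ell}(\log^\ell(eu)/u^{J+1})$.

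The main obstacle will be keeping the error uniform when the $\Ein$ tail meets the $(-\gamma-\log s)^{\ell-m}$ factor on the unbounded portion of the Hankel contour, where $\Ein(w/u)$ itself grows like $\log(w/u)$. The resolution is to split $\mathcal H$ at a radius $R$ that is independent of $u$: on $|w|\le R$ one uses the Taylor remainder bound, while on the infinite rays the logarithmic growth is dominated by $e^{-|w|}$, so the contour integrals of $|w|^{J}(\log(eu))^{\ell-m}e^{-|w|}$ produce exactly the polynomial-in-$\log u$ factor and the $u^{-J-1}$ decay required by \eqref{FranzeEXP}.
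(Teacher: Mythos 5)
Your route is genuinely different from the paper's. The paper does not truncate $G(u,s)^m=\Ein(s/u)^m$ at depth $J$ directly in the $\kappa=0$ integral: on the Bromwich line $|e^s|$ is constant, so a $|s|^{J+1}/u^{J+1}$ Taylor remainder re-integrates back up to $O(1)$ and only $O(\log^\ell eu/u)$ is obtained. Instead, the paper establishes the expansion for the auxiliary family $K_\ell(u,\kappa)$ with truncation depth $\kappa$ and error $O(\log^\ell eu/u)$, and then \emph{differentiates} the resulting identity $J$ times with $\kappa=J$, using $\frac{d}{du}K_\ell(u,\kappa)=\kappa K_\ell(u,\kappa-1)$ (Lemma~\ref{L_diff}) to recover $K_\ell(u)$ on the left and the $C_{r,\kappa}$-recursion (Lemma~\ref{Recursive1}) to collapse the main term; the extra $u^{-J}$ comes from Lemma~\ref{L_Error_Derivatives}. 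You instead stay with $\kappa=0$, truncate at depth $J$, and swap the Bromwich line for a Hankel contour so that $e^{\Re w}=e^{-|w|}$ on the rays supplies the decay. Your main-term computation, via $\mathcal L^{-1}\left[(-\gamma-\log s)^k s^{j-1}\right]$ and matching coefficients in the $e^{-\gamma z}s^{j-1-z}$ generating function, is correct and is arguably cleaner than the paper's recursive route through the $C_{r,\kappa}$. If the error analysis were watertight, this would be a nice alternative proof.

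However, there is a genuine gap in your treatment of the unbounded part of $\mathcal H$. You assert that $\Ein(w/u)$ ``grows like $\log(w/u)$'' on the infinite rays and that this logarithmic growth is beaten by $e^{-|w|}$. But the Hankel rays sit against the negative real axis, so $\Re(w/u)<0$, and there
\begin{equation*}
\Ein(z)=\gamma+\log z+E_1(z),\qquad E_1(z)\sim-\frac{e^{-z}}{z}\quad(z\to\infty,\ \Re z<0),
\end{equation*}
so $|\Ein(w/u)|$ grows \emph{exponentially}, like $u\,e^{|w|/u}/|w|$, not logarithmically. The same applies to $|E_1(w/u)^\ell|$. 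Thus the relevant product on the rays is
\begin{equation*}
\left|\frac{e^w}{w}\,E_1(w/u)^\ell\right|\ \asymp\ \frac{u^\ell}{|w|^{\ell+1}}\,e^{-|w|(1-\ell/u)},
\end{equation*}
and the decay of $e^w$ wins only because the standing hypothesis $u\ge\ell$ forces $1-\ell/u\ge0$; at $u=\ell$ there is no exponential decay at all, only the algebraic $|w|^{-\ell-1}$, and in the range $\ell\le u\le 2\ell$ the estimate must be closed by a trivial boundedness argument ($K_\ell(u)$ and the main term are both $O_{J,\ell}(1)$ there, as is $\log^\ell(eu)/u^{J+1}$). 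Your proposal never invokes $u\ge\ell$ at the crucial point and misstates the growth of $\Ein$ in the left half-plane, so the error bound is not actually established as written. Once you replace the logarithmic claim by the exponential asymptotics for $E_1$ and $\Ein$, split the ray integral at $|w|=u$ (Taylor remainder bound for $|w|<u$, the $e^{-|w|(1-\ell/u)}$ bound for $|w|>u$), and deal with $\ell\le u\le 2\ell$ trivially, the argument goes through.
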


This expansion offers a relatively simple way to compute $K_\ell(u)$ when compared to that of Grupp and Richert. Their treatment uses many recursively defined power series expansions to approximate $K_\ell(u)$, whereas Theorem \ref{Main} provides a single series expansion. In addition, the expansion above converges faster with $u$, while the power series expansions of Grupp and Richert have a slow rate of convergence outside their centers. However, one would need good bounds on the implied constant appearing in the error term above to make this a legitimate numerical method for computing $K_\ell(u)$. We leave this as a challenge to future researchers.

\section{An Auxilliary Result}
We will deduce Theorem \ref{Main} from an asymptotic expansion for a related family of integrals, $K_\ell(u,\kappa)$, previously investigated by the author in \cite{Franze}. To elaborate on this family, for integers $\ell\ge1$, we define
\begin{equation}\label{Alt_Mult_Int_Rep}
K_{\ell}(u,\kappa):=\frac{1}{\ell!}\idotsint\limits_{\substack{t_{1},\ldots,t_{\ell}\ge1\\t_{1}+\cdots+t_{\ell}\le u}}\left(u-(t_1+\cdots+t_\ell)\right)^{\kappa}\frac{dt_{1}}{t_{1}}\cdots\frac{dt_{\ell}}{t_{\ell}},
\end{equation}
while $K_{0}(u,\kappa):=u^{\kappa}$ for $0\le u\le1$. While these integrals can be defined for all real $\kappa>-1$, we will restrict our attention to integer $\kappa\ge0$. If $\kappa=0$, then of course $K_{\ell}(u,0)=K_{\ell}(u)$. For integer $\kappa\ge1$, we have the identity
\begin{equation}\label{Convol_Identity}
K_{\ell}(u,\kappa)=\kappa \int_{\ell}^{u}\left(u-t\right)^{\kappa-1}K_{\ell}(t)\ dt.
\end{equation}
We also have the useful contour integral representation available for all integer $\kappa\ge0$, 
\begin{equation}\label{Contour_Int_Rep}
  K _{\ell}(u,\kappa)=\frac{\Gamma(\kappa+1)}{\ell!}\frac{u^{\kappa}}{2\pi i}\int_{c-i\infty}^{c+i\infty}e^{s}E_1\left(\frac{s}{u}\right)^{\ell}\frac{ds}{s^{\kappa+1}},
\end{equation}
where $c>0$, and
\begin{equation*}
  E_1(s):=\int_{1}^{\infty}e^{-st}\frac{dt}{t}.
\end{equation*}
The convolution identity in \eqref{Convol_Identity} was proved in \cite[see (17)]{Franze}, while the contour integral representation in \eqref{Contour_Int_Rep} was shown in \cite[see (23)]{Franze}. Using this last representation, the author established in \cite{Franze} an asymptotic expansion for $K_\ell(u,\kappa)$, given below. The proof follows that of \eqref{SoundEXP}, but requires some additional calculations.
\begin{theorem*}[Franze]\label{Gen_Sound_Theorem}
  For each integer $\kappa\ge0$, and $\ell\ge1$, provided $u\ge\ell$,
  \begin{equation*}
    K_{\ell}(u,\kappa)=\sum_{m=0}^{\ell}\sum_{n=m}^{\kappa}\sum_{r=0}^{\ell-m}\frac{(-1)^{r}\Gamma(\kappa+1)}{m!(\ell-m-r)!}E_{n,m}C_{r,\kappa-n}u^{\kappa-n}\log^{\ell-m-r}u+O_{\kappa,\ell}\left(\frac{\log^{\ell}eu}{u}\right),
  \end{equation*}
  where the constants $E_{n,m}$ are given above in \eqref{E_nm_Def}, and $C_{r,\kappa}$ are given by the generating function,
  \begin{equation}\label{genfunc_C_rk}
    \sum_{r=0}^{\infty}C_{r,\kappa}z^r=\frac{e^{\gamma z}}{\Gamma(\kappa+1-z)}.
  \end{equation}
\end{theorem*}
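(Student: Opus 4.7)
The plan is to start from the contour integral representation \eqref{Contour_Int_Rep} and follow Soundararajan's proof of \eqref{SoundEXP}, but carry the expansion of $E_1(s/u)$ to higher order so as to capture the new $u^{\kappa-n}$ terms that appear when $\kappa\ge1$. The key decomposition is $E_1(s/u) = \log(u/s) - \gamma + \Ein(s/u)$, where $\Ein(z) = \int_0^{z}(1-e^{-t})/t\,dt$ is entire with power series $\sum_{k\ge1}(-1)^{k-1}z^k/(k\cdot k!)$. Applying the binomial theorem,
\[
E_1(s/u)^\ell = \sum_{m=0}^{\ell}\binom{\ell}{m}\bigl(\log(u/s)-\gamma\bigr)^{\ell-m}\Ein(s/u)^m,
\]
and expanding $\Ein(s/u)^m = \sum_{j\ge m}E_{j,m}(s/u)^j$ via \eqref{E_nm_Def} introduces precisely the constants $E_{n,m}$ appearing in the theorem.

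Inserting these expansions into \eqref{Contour_Int_Rep} and truncating the $j$-sum at $j=\kappa$ reduces the main-term contribution to a finite sum of integrals of the form
\[
\frac{1}{2\pi i}\int_{c-i\infty}^{c+i\infty} e^{s}\,s^{j-\kappa-1}\bigl(\log(u/s)-\gamma\bigr)^{\ell-m}\,ds.
\]
I would then further expand $(\log(u/s)-\gamma)^{\ell-m} = \sum_{r}\binom{\ell-m}{r}(\log u)^{\ell-m-r}(-\log s-\gamma)^{r}$ to separate the $\log u$ factors, and evaluate the residual contour integral using the Hankel formula $\frac{1}{2\pi i}\int e^{s}s^{-w}\,ds = 1/\Gamma(w)$ combined with the parameter-differentiation identity $(-\log s-\gamma)^{r} = \partial_{z}^{r}\bigl[s^{-z}e^{-\gamma z}\bigr]_{z=0}$. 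Comparing with \eqref{genfunc_C_rk} under the substitution $z\mapsto -z$ identifies the result as $r!(-1)^{r}C_{r,\kappa-j}$, and the binomial factors collapse to $\binom{\ell}{m}\binom{\ell-m}{r}r!/\ell! = 1/(m!(\ell-m-r)!)$, producing exactly the combinatorial weight in the theorem.

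The main obstacle is the error analysis for the tail $j\ge\kappa+1$ together with the justification of the contour manipulations, since the series for $\Ein(s/u)^m$ converges only for $|s|/u$ bounded and does not behave well uniformly along a full vertical line. To address this, I would deform the vertical contour to a Hankel contour hugging the branch cut of $\log s$ along the negative real axis and split it at some radius $R$ that is polynomial in $\log u$: on the bounded portion the Taylor remainder for $\Ein(s/u)^m$ is controlled by a constant times $(|s|/u)^{\kappa+1}$, while on the outgoing and incoming rays the exponential decay of $e^{s}$ absorbs the polynomial growth of the $\log s$ and $\Ein(s/u)^m$ factors. Combining these bounds with the $u^{\kappa}$ prefactor in \eqref{Contour_Int_Rep} and at most $(\log eu)^{\ell}$ from the residual logarithmic factors delivers the claimed error $O_{\kappa,\ell}(u^{-1}\log^{\ell}eu)$.
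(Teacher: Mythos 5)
Your main-term computation is essentially the paper's own: the function you call $\Ein(s/u)$ is exactly $G(u,s)$ in \eqref{G_Def}, so your decomposition $E_1(s/u)=\log(u/s)-\gamma+\Ein(s/u)$, the binomial expansion, the truncation of $\Ein(s/u)^m=\sum_{j\ge m}E_{j,m}(s/u)^j$ at $j=\kappa$ (this is \eqref{R_head}), and the evaluation of the residual contour integrals all match the sketch in the paper. Your Hankel-formula/parameter-differentiation identification of $\frac{1}{2\pi i}\int e^{s}s^{j-\kappa-1}(-\log s-\gamma)^{r}\,ds$ with $r!(-1)^{r}C_{r,\kappa-j}$ is correct and amounts to proving the generating-function statement \eqref{genfunc_C_rk} from the integral definition \eqref{E1_begin}, which the paper outsources to \cite[Lemma 4]{Franze}; the combinatorial collapse $\binom{\ell}{m}\binom{\ell-m}{r}r!/\ell!=1/(m!(\ell-m-r)!)$ is also right.

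The error analysis is where you diverge from the paper, and as written it has a genuine gap. The paper stays on the vertical line $\Re s=1$, splits at $|s|=u$, and bounds the tail $R_m$ there (this is \cite[Lemma 12]{Franze}, echoed in Lemma \ref{L_Error_Derivatives}); you instead deform to a Hankel contour and assert that on the rays ``the exponential decay of $e^{s}$ absorbs the polynomial growth of the $\log s$ and $\Ein(s/u)^m$ factors.'' But near the negative real axis $\Ein(z)=\log z+\gamma+E_1(z)$ and $E_1(z)$ grows like $e^{|z|}/|z|$ there, so $|\Ein(s/u)|^m\asymp e^{m|s|/u}(u/|s|)^m$ grows \emph{exponentially}, not polynomially. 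Since $m\le\ell\le u$ the factor $|e^{s}|=e^{-|s|}$ does compensate, but with zero margin when $u=\ell=m$: bounding $e^{-|s|}e^{m|s|/u}\le 1$ and integrating the remaining polynomial factors only yields $O_{\kappa,\ell}\left((1+\log u)^{\ell-m}\right)$ for that piece, which is far larger than the claimed $O(\log^{\ell}eu/u)$. The estimate can be repaired — for $u\ge 2\ell$ one retains the decay $e^{-|s|(1-m/u)}\le e^{-|s|/2}$ and the ray contribution is exponentially small, while for $\ell\le u<2\ell$ the target error term is $\gg_{\ell}1$ so an $O_{\kappa,\ell}(1)$ bound suffices — but this case split is exactly the delicate point your sketch elides. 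You also need to justify the deformation itself: on the vertical line the integrand of \eqref{Error_Term} decays only like $\log^{\ell-m}|s|/|s|$ (the integral is conditionally convergent), so moving the contour requires either a limiting argument with vanishing connecting segments or a prior integration by parts in $s$, which is precisely how the paper gains absolute convergence before estimating.
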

\begin{remark*}
  Observe that the $n$-sum vanishes if $m>\kappa$. We use this observation to avoid the repetitive condition, $0\le m\le\min(\kappa,\ell)$, on the $m$-sum.
\end{remark*}
For our purpose, we will need to make the error term above explicit, at least when $\kappa\ge\ell$. Thus, we provide a quick sketch of the proof under this assumption. 
\begin{proof}[Sketch of proof of Theorem \ref{Gen_Sound_Theorem}]
To begin, it was observed in \cite[p.28]{Sound} that
\begin{equation*}
E_{1}\left(\frac{s}{u}\right)=G(u,s)-\log\left(\frac{s}{u}\right)-\gamma,
\end{equation*}
where
\begin{equation}\label{G_Def}
G(u,s):=\int_{0}^{1/u}\frac{1-e^{-ts}}{t}dt.
\end{equation}
Inserting this relationship into \eqref{Contour_Int_Rep}, and then using the binomial theorem, gives
\begin{equation}\label{E14}
K_{\ell}(u,\kappa)=\sum_{m=0}^{\ell}\binom{\ell}{m}\frac{\Gamma(\kappa+1)}{\ell!}u^{\kappa}\frac{1}{2\pi i}\int_{c-i\infty}^{c+i\infty}e^{s}G(u,s)^{m}\left(\log u-\log s-\gamma\right)^{\ell-m}\frac{ds}{s^{\kappa+1}}.
\end{equation}
Next, we truncate the Taylor series expansion for $G(u,s)^{m}$, writing
\begin{equation}\label{R_head}
G(u,s)^{m}=\sum_{n=m}^{\kappa}E_{n,m}\left(\frac{s}{u}\right)^n+R_m(u,s),
\end{equation}
where
\begin{equation}\label{R_tail}
  R_m(u,s)=\sum_{n=\kappa+1}^{\infty}E_{n,m}\left(\frac{s}{u}\right)^{n}.
\end{equation}
An exact expression for the coefficients, $E_{n,m}$, is given in \cite[see (8)]{Franze}. Here, it is enough to know that $E_{n,m}=0$ if $m>n$, and the bound in \cite[see (30)]{Franze},
\begin{equation}\label{wasteful_E_bound}
  \left|E_{n,m}\right|\le\frac{m^n}{n!}.
\end{equation}
After substituting \eqref{R_head} into \eqref{E14}, we identify a main term and an error term,
\begin{equation}\label{MT_ET_Identity}
  K_\ell(u,\kappa)=\widetilde{K}_\ell(u,\kappa)+\mathscr{E}_\ell(u,\kappa),
\end{equation}
where
\begin{equation*}
\widetilde{K}_{\ell}(u,\kappa)=\sum_{m=0}^{\ell}\sum_{n=m}^{\kappa}\binom{\ell}{m}\frac{\Gamma(\kappa+1)}{\ell!}E_{n,m}u^{\kappa-n}\frac{1}{2\pi i}\int_{c-i\infty}^{c+i\infty}e^{s}s^n\left(\log u-\log s-\gamma\right)^{\ell-m}\frac{ds}{s^{\kappa+1}},
\end{equation*}
and,
\begin{equation}\label{Error_Term}
\mathscr{E}_\ell(u,\kappa)=\sum_{m=0}^{\ell}\binom{\ell}{m}\frac{\Gamma(\kappa+1)}{\ell!}\frac{u^{\kappa}}{2\pi i}\int_{c-i\infty}^{c+i\infty}e^{s}R_{m}(u,s)\left(\log u-\log s-\gamma\right)^{\ell-m}\frac{ds}{s^{\kappa+1}}.
\end{equation}
For the error term, $\mathscr{E}_\ell(u,\kappa)$, it was shown in \cite[Lemma 12]{Franze} that for $u\ge\ell$,
\begin{equation*}
  \mathscr{E}_{\ell}(u,\kappa)\ll_{\kappa,\ell}\frac{\log^{\ell}eu}{u}.
\end{equation*}
For the main term, $\widetilde{K}_{\ell}(u,\kappa)$, we use the binomial theorem once more,
\begin{equation*}
\left(\log u-\log s-\gamma\right)^{\ell-m}=\sum_{r=0}^{\ell-m}\binom{\ell-m}{r}(-1)^{r}\left(\log s+\gamma\right)^{r}\log^{\ell-m-r}u,
\end{equation*}
and define the constants $C_{r,\kappa}$ by
\begin{equation}\label{E1_begin}
C_{r,\kappa}:=\frac{1}{r!}\frac{1}{2\pi i}\int_{c-i\infty}^{c+i\infty}\frac{e^{s}}{s^{\kappa+1}}\left(\log s+\gamma\right)^{r}ds.
\end{equation}
It is then a straightforward exercise to show that $\widetilde{K}_\ell(u,\kappa)$ takes the form stated in the theorem,
\begin{equation}\label{AltFormMainK}
  \widetilde{K}_{\ell}(u,\kappa)=\sum_{m=0}^{\ell}\sum_{n=m}^{\kappa}\sum_{r=0}^{\ell-m}\frac{(-1)^{r}\Gamma(\kappa+1)}{m!(\ell-m-r)!}E_{n,m}C_{r,\kappa-n}u^{\kappa-n}\log^{\ell-m-r}u.
\end{equation}
See \cite[Lemma 4]{Franze} to verify that the constants $C_{r,\kappa}$ defined in \eqref{E1_begin} are generated by the function given above in \eqref{genfunc_C_rk}. This completes the sketch of the proof of Theorem \ref{Gen_Sound_Theorem}.
\end{proof}

Before continuing on, some comments about the constants $C_{r,\kappa}$ are in order. First, it is clear that $C_{r,\kappa}$ generalize $C_r$ since comparing \eqref{DickmanGenFunc} and \eqref{genfunc_C_rk}, we have $C_{r,0}=C_r$. Broadhurst called the constants, $C_r$, the \emph{Dickman constants} and conjectured their generating function, $e^{\gamma z}/\Gamma(1-z)$, though they are initially defined by integrals as in \eqref{E1_begin}. From this function one can easily deduce that for $r\ge1$, 
\begin{equation*}
  C_r=\frac{1}{r!}\sum_{k=1}^{r}(-1)^{k}B_{r,k}\left(0,1!\zeta(2),2!\zeta(3),\ldots,(r-k)!\zeta(r-k+1)\right),    
\end{equation*}  
where $B_{r,k}$ is a certain Bell polynomial, defined below in \eqref{BellY}. For integer $\kappa\ge1$, the \emph{Generalized Dickman constants}, $C_{r,\kappa}$, can be related back to $C_r$ using the recursive formula,
\begin{equation*}
  C_{r,\kappa}=\sum_{j=0}^{r}\frac{C_{j,\kappa-1}}{\kappa^{r-j+1}}.
\end{equation*}
Proofs of these observations can be found in \cite{Franze}. For our purpose, we provide a different recursive formula for $C_{r,\kappa}$, whose proof uses only the integral definition of $C_{r,\kappa}$ given above in \eqref{E1_begin}.
\begin{lemma}\label{Recursive1}
For natural numbers $r\ge0$ and $\kappa\ge 1$, we have
\begin{equation*}
\kappa\ C_{r,\kappa}=C_{r,\kappa-1}+C_{r-1,\kappa},
\end{equation*}
where we define $C_{-1,\kappa}:=0$.
\end{lemma}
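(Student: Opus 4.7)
The plan is to apply integration by parts directly to the contour-integral definition \eqref{E1_begin} of $C_{r,\kappa}$. The starting observation is the elementary identity $\kappa/s^{\kappa+1}=-\tfrac{d}{ds}[1/s^{\kappa}]$, which rewrites
\begin{equation*}
\kappa\, C_{r,\kappa}=-\frac{1}{r!}\frac{1}{2\pi i}\int_{c-i\infty}^{c+i\infty}e^{s}(\log s+\gamma)^{r}\,\frac{d}{ds}\!\left[\frac{1}{s^{\kappa}}\right]ds.
\end{equation*}
On the vertical contour $\operatorname{Re}(s)=c>0$, the factor $e^{s}$ has constant modulus, while $(\log s+\gamma)^{r}/s^{\kappa}$ decays like $(\log |s|)^{r}/|s|^{\kappa}$ as $|\operatorname{Im}(s)|\to\infty$; since $\kappa\ge 1$ this is enough for the boundary contributions at $c\pm i\infty$ to vanish, so integration by parts gives
\begin{equation*}
\kappa\, C_{r,\kappa}=\frac{1}{r!}\frac{1}{2\pi i}\int_{c-i\infty}^{c+i\infty}\frac{1}{s^{\kappa}}\,\frac{d}{ds}\!\left[e^{s}(\log s+\gamma)^{r}\right]ds.
\end{equation*}

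Expanding the derivative via the product rule produces two integrals. The first, from differentiating $e^{s}$, is
\begin{equation*}
\frac{1}{r!}\frac{1}{2\pi i}\int_{c-i\infty}^{c+i\infty}\frac{e^{s}(\log s+\gamma)^{r}}{s^{\kappa}}\,ds,
\end{equation*}
which is precisely $C_{r,\kappa-1}$ by \eqref{E1_begin} (with $\kappa$ replaced by $\kappa-1$). The second, from differentiating $(\log s+\gamma)^{r}$, contributes
\begin{equation*}
\frac{r}{r!}\frac{1}{2\pi i}\int_{c-i\infty}^{c+i\infty}\frac{e^{s}(\log s+\gamma)^{r-1}}{s^{\kappa+1}}\,ds,
\end{equation*}
and since $r/r!=1/(r-1)!$, this is exactly $C_{r-1,\kappa}$. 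Summing the two yields the claimed recursion for $r\ge 1$.

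For $r=0$ the second piece is simply absent (consistent with the convention $C_{-1,\kappa}:=0$), and the identity collapses to $\kappa\, C_{0,\kappa}=C_{0,\kappa-1}$, which is immediate from the Hankel evaluation $C_{0,\kappa}=1/\kappa!$ implicit in \eqref{E1_begin}. The only step requiring any care is the vanishing of the boundary terms in the integration by parts, but this is guaranteed by $\kappa\ge 1$, which forces the polynomial decay $|s|^{-\kappa}$ to dominate the logarithmic growth of $(\log s+\gamma)^{r}$; the rest of the argument is a direct pattern-match of the resulting integrands against the definition of $C_{r,\kappa}$.
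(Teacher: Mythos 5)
Your proof is correct and follows essentially the same approach as the paper: a single integration by parts applied to the contour-integral definition \eqref{E1_begin}, with the resulting integrands recognized as $C_{r,\kappa-1}$ and $C_{r-1,\kappa}$. The paper sets up the parts slightly differently (starting from $C_{r,\kappa-1}$, writing $e^{s}=\tfrac{d}{ds}e^{s}$, and differentiating $(\log s+\gamma)^{r}/s^{\kappa}$, rather than starting from $\kappa C_{r,\kappa}$ and differentiating $e^{s}(\log s+\gamma)^{r}$ against $s^{-\kappa}$), but the computation is a mirror image of yours and the boundary-term justification you give is exactly what the paper leaves implicit.
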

\begin{proof}
Using integration by parts, we see that
\begin{align*}
  C_{r,\kappa-1}&=-\frac{1}{r!}\frac{1}{2\pi i}
            \int_{c-i\infty}^{c+i\infty}e^{s}\frac{d}{ds}\left(\frac{(\log s+\gamma)^{r}}{s^{\kappa}}\right)ds\\
            &=-\frac{1}{(r-1)!}\frac{1}{2\pi i}\int_{c-i\infty}^{c+i\infty}e^{s}\frac{(\log s+\gamma)^{r-1}}{s^{\kappa+1}}ds+\kappa\frac{1}{r!}\frac{1}{2\pi i}\int_{c-i\infty}^{c+i\infty}e^{s}\frac{(\log s+\gamma)^{r}}{s^{\kappa+1}}ds,
\end{align*}
and thus,
    \begin{equation*}
        C_{r,\kappa-1}=-C_{r-1,\kappa}+\kappa C_{r,\kappa}.
    \end{equation*}
\end{proof}
\section{Proof of Theorem 1}
Now that we have made the main term and error term in \eqref{MT_ET_Identity} explicit, we are ready to prove Theorem \ref{Main}. We intend to show that the expansion given in Theorem \ref{Main} is obtained by repeated differentiation of \eqref{MT_ET_Identity} in the next several lemmas. To begin, observe that
\begin{lemma}\label{L_diff}
For each integer $\kappa\ge1$,
\begin{equation}\label{K_deriv}
\frac{d}{du}\left(K_{\ell}(u,\kappa)\right)=\kappa K_{\ell}(u,\kappa-1).
\end{equation}
\end{lemma}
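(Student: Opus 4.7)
The plan is to deduce the differentiation identity directly from the convolution identity \eqref{Convol_Identity}, which expresses $K_{\ell}(u,\kappa)$ as a $(\kappa-1)$-fold weighted integral of $K_{\ell}(t)$ against the kernel $(u-t)^{\kappa-1}$ on $[\ell,u]$. The differentiation will then amount to a routine application of Leibniz's rule (differentiation under the integral sign), with the boundary term behaving favorably.

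First I would handle the base case $\kappa=1$ separately, since on the right-hand side this requires interpreting $K_{\ell}(u,0)=K_{\ell}(u)$. Specializing \eqref{Convol_Identity} to $\kappa=1$ gives $K_{\ell}(u,1)=\int_{\ell}^{u}K_{\ell}(t)\,dt$, so by the fundamental theorem of calculus, $\frac{d}{du}K_{\ell}(u,1)=K_{\ell}(u)=K_{\ell}(u,0)$, as required (the value at the lower endpoint $u=\ell$ is consistent since $K_{\ell}(\ell,1)=0$). For the case $\kappa\ge 2$, I would start from
\begin{equation*}
K_{\ell}(u,\kappa)=\kappa\int_{\ell}^{u}(u-t)^{\kappa-1}K_{\ell}(t)\,dt,
\end{equation*}
and differentiate. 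The boundary term at $t=u$ contributes $\kappa(u-u)^{\kappa-1}K_{\ell}(u)=0$ because $\kappa-1\ge 1$, and the boundary at $t=\ell$ contributes nothing since $u$ is the variable upper limit. The only surviving piece is $\partial_{u}$ acting on the integrand, giving
\begin{equation*}
\frac{d}{du}K_{\ell}(u,\kappa)=\kappa(\kappa-1)\int_{\ell}^{u}(u-t)^{\kappa-2}K_{\ell}(t)\,dt,
\end{equation*}
which by a second application of \eqref{Convol_Identity} (now with parameter $\kappa-1$) equals $\kappa\cdot K_{\ell}(u,\kappa-1)$.

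There is essentially no obstacle: the only point that requires mild care is verifying that differentiation under the integral is legitimate, which follows because the integrand $(u-t)^{\kappa-1}K_{\ell}(t)$ and its partial derivative in $u$ are continuous in $(u,t)$ on the compact range of integration for each fixed $u>\ell$, and because the boundary value at $t=u$ vanishes for $\kappa\ge 2$. The case $\kappa=1$ versus $\kappa\ge 2$ split is the only structural feature worth flagging, and both cases collapse into the single unified identity \eqref{K_deriv}.
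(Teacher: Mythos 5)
Your proof is correct and takes exactly the approach the paper intends: the paper's one-line proof says ``differentiation under the integral sign, using equation \eqref{Convol_Identity},'' and you have simply carried out those details, including the sensible split between $\kappa=1$ (fundamental theorem of calculus) and $\kappa\ge 2$ (Leibniz rule with vanishing boundary term). No gaps; this is a fleshed-out version of the paper's argument.
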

\begin{proof}
  This is an application of differentiation under the integral sign, using equation \eqref{Convol_Identity}.
\end{proof}
On the other hand, the derivative of the main term, $\widetilde{K}_\ell(u,\kappa)$, obeys
\begin{lemma}\label{DiffLemma}
For each integer $\kappa\ge1$, and $\widetilde{K}_{\ell}(u,\kappa)$ defined as above, we have
\begin{equation*}
\frac{d}{du}\left(\widetilde{K}_{\ell}(u,\kappa)\right)=\kappa \widetilde{K}_{\ell}(u,\kappa-1)+\sum_{m=0}^{\ell}\sum_{r=0}^{\ell-m}\frac{(-1)^{r}\Gamma(\kappa+1)}{m!(\ell-m-r)!}E_{\kappa,m}C_{r,0}\frac{d}{du}\left(\log^{\ell-m-r}u\right).
\end{equation*}
\end{lemma}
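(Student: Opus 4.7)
My plan is to differentiate the explicit formula for $\widetilde{K}_\ell(u,\kappa)$ given in \eqref{AltFormMainK} term by term, then match the result against $\kappa\widetilde{K}_\ell(u,\kappa-1)$ using the recurrence from Lemma \ref{Recursive1}. Applying the product rule to the factor $u^{\kappa-n}\log^{\ell-m-r}u$ splits $\frac{d}{du}\widetilde{K}_\ell(u,\kappa)$ into two pieces: a piece $D_1$ arising from differentiating $u^{\kappa-n}$, which is automatically supported on $m\le n\le \kappa-1$ because of the $(\kappa-n)$ factor, and a piece $D_2$ arising from differentiating $\log^{\ell-m-r}u$, which still runs over $m\le n\le\kappa$.

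\noindent\textbf{Isolating the extra term.} The $n=\kappa$ contribution to $D_2$ specializes to $u^{\kappa-n}=1$, $E_{n,m}=E_{\kappa,m}$, and $C_{r,\kappa-n}=C_{r,0}$, so it is precisely the extra sum appearing on the right-hand side of the lemma. Subtracting this off, it then suffices to show that the remaining part of $D_2$ (with $n\le\kappa-1$) together with $D_1$ equals $\kappa\widetilde{K}_\ell(u,\kappa-1)$.

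\noindent\textbf{Combining $D_1$ and the truncated $D_2$.} For the truncated $D_2$, I would use $\frac{d}{du}\log^{\ell-m-r}u = (\ell-m-r)u^{-1}\log^{\ell-m-r-1}u$ and shift $r\mapsto r+1$; after absorbing the factor $(\ell-m-r)$ into the factorial and the sign into $(-1)^r$, the coefficient of $u^{\kappa-n-1}\log^{\ell-m-r}u$ acquires a factor $-C_{r-1,\kappa-n}$ (using the convention $C_{-1,\kappa}=0$ to include $r=0$). Meanwhile the same monomial appears in $D_1$ with coefficient $(\kappa-n)C_{r,\kappa-n}$. Thus the combined bracket is $(\kappa-n)C_{r,\kappa-n}-C_{r-1,\kappa-n}$, which by Lemma \ref{Recursive1} (applied with $\kappa$ there equal to $\kappa-n$ here) equals $C_{r,\kappa-n-1}$. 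Finally, relabeling $\kappa-n-1=(\kappa-1)-n$ and using $\kappa\Gamma(\kappa)=\Gamma(\kappa+1)$, the resulting triple sum is exactly $\kappa\widetilde{K}_\ell(u,\kappa-1)$.

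\noindent\textbf{Main obstacle.} The conceptual content is entirely contained in Lemma \ref{Recursive1}; everything else is index bookkeeping. The only real subtlety is making sure the $r\mapsto r+1$ shift in the log-derivative piece correctly absorbs the boundary case $r=0$ via $C_{-1,\cdot}=0$, and that the $n=\kappa$ term is cleanly separated before the shift rather than after, so that the factor $(\kappa-n)$ in $D_1$ can cancel the reindexing sign precisely where the recurrence applies.
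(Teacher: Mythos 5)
Your proof is correct and follows essentially the same route as the paper: differentiate \eqref{AltFormMainK} term by term, peel off the $n=\kappa$ contribution as the extra sum, shift $r\mapsto r+1$ in the log-derivative piece, and invoke Lemma~\ref{Recursive1} (with $\kappa$ there replaced by $\kappa-n$) to collapse the combined coefficient to $C_{r,(\kappa-1)-n}$. In fact you state the intermediate coefficient more carefully than the paper's displayed formula, which writes $\kappa C_{r,\kappa}-C_{r-1,\kappa}$ where $(\kappa-n)C_{r,\kappa-n}-C_{r-1,\kappa-n}$ is meant.
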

\begin{proof}
Differentiating \eqref{AltFormMainK}, and separating the terms with $n=\kappa$, we may see, upon re-indexing, that
\begin{multline*}
  \frac{d}{du}\left(\widetilde{K}_{\ell}(u,\kappa)\right)=\sum_{m=0}^{\ell}\sum_{n=m}^{\kappa-1}\sum_{r=0}^{\ell-m}
  \frac{(-1)^{r}\Gamma(\kappa+1)}{m!(\ell-m-r)!}E_{n,m}\left(\kappa C_{r,\kappa}-C_{r-1,\kappa}\right)u^{\kappa-1-n}\log^{\ell-m-r}u\\
  +\sum_{m=0}^{\ell}\sum_{r=0}^{\ell-m}\frac{(-1)^{r}\Gamma(\kappa+1)}{m!(\ell-m-r)!}E_{\kappa,m}C_{r,0}\frac{d}{du}\left(\log^{\ell-m-r}u\right).
\end{multline*}
Using Lemma \ref{Recursive1} in the first sum concludes the proof of this lemma.
\end{proof}
For higher order derivatives of $\widetilde{K}_\ell(u,\kappa)$, we have
\begin{lemma}\label{DiffLemma2}
  For each $1\le \nu\le \kappa$, we have
  \begin{equation*}
    \widetilde{K}_{\ell}^{(\nu)}(u,\kappa)=(\kappa)_{\nu}\widetilde{K}_{\ell}(u,\kappa-\nu)
        +\sum_{j=0}^{\nu-1}\sum_{m=0}^{\ell}\sum_{r=0}^{\ell-m}
            \frac{(-1)^{r}\Gamma(\kappa+1)}{m!(\ell-m-r)!}E_{\kappa-j,m}C_{r,0}\left(\log^{\ell-m-r}u\right)^{(\nu-j)},
  \end{equation*}
  where all derivatives are taken with respect to $u$, and $(x)_{n}$ denotes the falling factorial,
  \begin{equation*}
  (x)_{n}:=x(x-1)(x-2)\cdots(x-n+1).
\end{equation*}
\end{lemma}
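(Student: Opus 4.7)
The plan is to prove Lemma \ref{DiffLemma2} by induction on $\nu$, taking Lemma \ref{DiffLemma} as both the base case and the inductive engine. For $\nu=1$, the claimed formula reduces precisely to the statement of Lemma \ref{DiffLemma}, after noting that $(\kappa)_1=\kappa$ and the $j$-sum collapses to the single term $j=0$.

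For the inductive step, I would assume the formula holds at some $1\le\nu<\kappa$ and differentiate both sides with respect to $u$. Applying Lemma \ref{DiffLemma} to the leading piece $(\kappa)_{\nu}\widetilde{K}_{\ell}(u,\kappa-\nu)$ produces two contributions: first,
\begin{equation*}
(\kappa)_{\nu}(\kappa-\nu)\,\widetilde{K}_{\ell}(u,\kappa-\nu-1)=(\kappa)_{\nu+1}\widetilde{K}_{\ell}(u,\kappa-\nu-1),
\end{equation*}
which is the expected leading term at level $\nu+1$; and second, a boundary term involving $E_{\kappa-\nu,m}C_{r,0}$. The key bookkeeping identity is
\begin{equation*}
(\kappa)_{\nu}\,\Gamma(\kappa-\nu+1)=\Gamma(\kappa+1),
\end{equation*}
which ensures that the $\Gamma(\kappa-\nu+1)$ that Lemma \ref{DiffLemma} produces gets upgraded to the uniform $\Gamma(\kappa+1)$ appearing in the statement. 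With that identification, the boundary term becomes exactly the $j=\nu$ summand of the desired $j$-sum at level $\nu+1$.

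It then remains to differentiate the inductive $j$-sum itself. Since each summand carries a factor $(\log^{\ell-m-r}u)^{(\nu-j)}$ and all other factors are constants in $u$, the derivative simply raises the superscript by one, giving $(\log^{\ell-m-r}u)^{(\nu+1-j)}$ for $0\le j\le\nu-1$. Combined with the new $j=\nu$ term produced above, this assembles the complete sum over $0\le j\le\nu$, matching the right-hand side of the claimed identity at level $\nu+1$.

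The argument is entirely mechanical once the identity $(\kappa)_{\nu}\Gamma(\kappa-\nu+1)=\Gamma(\kappa+1)$ is used to reconcile the $\Gamma$-factors; no analytic input beyond Lemma \ref{DiffLemma} is required. The only mild obstacle is the index bookkeeping — in particular, making sure the inductive step is only applied while $\kappa-\nu\ge 1$ so that Lemma \ref{DiffLemma} is legitimately available, which is exactly the range $1\le \nu\le \kappa$ claimed in the statement. This also explains why the recursion terminates at $\nu=\kappa$: at that point the residual $\widetilde{K}_{\ell}(u,0)$ has no $\kappa$-dependence left to reduce.
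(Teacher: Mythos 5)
Your proposal is correct and follows essentially the same route as the paper's proof: induction on $\nu$ with Lemma \ref{DiffLemma} as both base case and inductive engine, using the factorial identities $(\kappa)_{\nu}(\kappa-\nu)=(\kappa)_{\nu+1}$ and $(\kappa)_{\nu}\Gamma(\kappa-\nu+1)=\Gamma(\kappa+1)$ to absorb the boundary contribution into the growing $j$-sum. The only cosmetic difference is that you step from $\nu$ to $\nu+1$ while the paper steps from $\nu-1$ to $\nu$.
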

\begin{proof}
  When $\nu=1$, Lemma \ref{DiffLemma2} can easily be deduced from Lemma \ref{DiffLemma}. Proceeding by induction, suppose the theorem is true for $\nu-1$. Since
  \begin{equation*}
    \widetilde{K}_{\ell}^{(\nu)}(u,\kappa)=\frac{d}{du}\widetilde{K}_{\ell}^{(\nu-1)}(u,\kappa),
  \end{equation*}
  we have
  \begin{equation*}
    \widetilde{K}_{\ell}^{(\nu)}(u,\kappa)=(\kappa)_{\nu-1}\widetilde{K}_{\ell}^{'}(u,\kappa-\nu+1)
        +\sum_{j=0}^{\nu-2}\sum_{m=0}^{\ell}\sum_{r=0}^{\ell-m}
            \frac{(-1)^{r}\Gamma(\kappa+1)}{m!(\ell-m-r)!}E_{\kappa-j,m}C_{r,0}\left(\log^{\ell-m-r}u\right)^{(\nu-j)}.
  \end{equation*}
  Now, using Lemma \ref{DiffLemma} on $\widetilde{K}_{\ell}^{'}(u,\kappa-\nu+1)$, this reads
  \begin{multline*}
    \widetilde{K}_{\ell}^{(\nu)}(u,\kappa)=(\kappa)_{\nu-1}(\kappa-\nu+1)\widetilde{K}_{\ell}(u,\kappa-\nu)\\
        +\sum_{m=0}^{\ell}\sum_{r=0}^{\ell-m}
            \frac{(-1)^{r}(\kappa)_{\nu-1}\Gamma(\kappa-\nu+2)}{m!(\ell-m-r)!}E_{\kappa-\nu+1,m}C_{r,0}\left(\log^{\ell-m-r}u\right)^{(1)}\\
        +\sum_{j=0}^{\nu-2}\sum_{m=0}^{\ell}\sum_{r=0}^{\ell-m}
            \frac{(-1)^{r}\Gamma(\kappa+1)}{m!(\ell-m-r)!}E_{\kappa-j,m}C_{r,0}\left(\log^{\ell-m-r}u\right)^{(\nu-j)}.
  \end{multline*}
  Since $(\kappa)_{\nu-1}(\kappa-\nu+1)=(\kappa)_{\nu}$, and $(\kappa)_{\nu-1}\Gamma(\kappa-\nu+2)=\Gamma(\kappa+1)$, the first sum can be absorbed into the last sum as the term corresponding to $j=\nu-1$, we are done.
\end{proof}
Finally, for the derivatives of the error term, $\mathscr{E}_{\ell}(u,\kappa)$, we prove
\begin{lemma}\label{L_Error_Derivatives}
  If $\ell\ge1$, $\kappa\ge0$, and $0\le\nu\le\kappa$, then for $u\ge\ell$,
  \begin{equation*}
    \mathscr{E}_{\ell}^{(\nu)}(u,\kappa)\ll_{\nu,\ell,\kappa}\frac{\log^{\ell}eu}{u^{\nu+1}}.
  \end{equation*}
\end{lemma}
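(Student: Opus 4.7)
I would prove Lemma \ref{L_Error_Derivatives} by induction on $\nu$, with the base case $\nu = 0$ being exactly the bound $\mathscr{E}_\ell(u,\kappa)\ll_{\kappa,\ell}\log^\ell(eu)/u$ from \cite[Lemma~12]{Franze} recalled in the sketched proof above. For the inductive step the tempting shortcut is to differentiate the identity $\mathscr{E}_\ell'(u,\kappa) = \kappa\mathscr{E}_\ell(u,\kappa-1) - T_\kappa(u)$ coming from Lemma \ref{DiffLemma}, but the two resulting pieces are each only of size $\log^\ell(eu)/u^\nu$, so the extra factor of $1/u$ we need must be exposed by working with the underlying contour integral directly.

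Writing $\mathscr{E}_\ell(u,\kappa) = \sum_{m=0}^{\ell}\binom{\ell}{m}\tfrac{\Gamma(\kappa+1)}{\ell!}u^\kappa I_m(u)$ with
\begin{equation*}
I_m(u) := \frac{1}{2\pi i}\int_{c-i\infty}^{c+i\infty}e^sR_m(u,s)(\log u-\log s-\gamma)^{\ell-m}\frac{ds}{s^{\kappa+1}},
\end{equation*}
I would differentiate under the integral. The key computation is
\begin{equation*}
\frac{\partial R_m}{\partial u}(u,s) = -\frac{1}{u}\widetilde{R}_m(u,s),\qquad \widetilde{R}_m(u,s):=\sum_{n=\kappa+1}^{\infty}nE_{n,m}(s/u)^n,
\end{equation*}
together with $\partial_u(\log u-\log s-\gamma)^{\ell-m} = (\ell-m)(\log u-\log s-\gamma)^{\ell-m-1}/u$. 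Each derivative therefore pulls out a factor $1/u$ and returns an integral of exactly the same shape as $I_m(u)$, with Taylor tail still supported on $n\ge\kappa+1$ and log power at most $\ell-m$. Iterating, $I_m^{(k)}(u) = u^{-k}J_m^{[k]}(u)$, where $J_m^{[k]}(u)$ is a finite linear combination (with coefficients depending only on $k,\ell,m$) of contour integrals of the form $\frac{1}{2\pi i}\int e^sf(u,s)(\log u-\log s-\gamma)^{p}\,ds/s^{\kappa+1}$, with $p\le\ell-m$ and $f(u,s) = \sum_{n\ge\kappa+1}\alpha_n(s/u)^n$ satisfying $|\alpha_n|\le P(n)m^n/n!$ for some polynomial $P$ of degree $k$.

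Next I would extend \cite[Lemma~12]{Franze} to show $|J_m^{[k]}(u)|\ll_{k,\kappa,\ell}\log^{\ell-m}(eu)/u^{\kappa+1}$ for $u\ge\ell$, by running Franze's contour-shift argument almost verbatim; the only change is that the bound $|E_{n,m}|\le m^n/n!$ from \eqref{wasteful_E_bound} is replaced by $|\alpha_n|\le P(n)m^n/n!$, which is still summable with room to spare because the tail starts at $n=\kappa+1$. Leibniz then gives
\begin{equation*}
\mathscr{E}_\ell^{(\nu)}(u,\kappa) = \sum_{m=0}^{\ell}\binom{\ell}{m}\frac{\Gamma(\kappa+1)}{\ell!}\sum_{j=0}^{\nu}\binom{\nu}{j}(\kappa)_ju^{\kappa-j}I_m^{(\nu-j)}(u),
\end{equation*}
and the term-by-term estimate $|u^{\kappa-j}I_m^{(\nu-j)}(u)|\ll\log^{\ell-m}(eu)/u^{\nu+1}$, summed over $j$ and $m$, yields the advertised bound.

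The main obstacle is the third step: reworking \cite[Lemma~12]{Franze} for the deformed integrands $J_m^{[k]}$. This should be mechanical, but requires tracking how the polynomial weights $P(n)$ and the various shifted log powers propagate through Franze's contour estimates, and verifying that the resulting logarithmic factor never exceeds $\log^{\ell-m}(eu)$.
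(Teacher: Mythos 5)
Your overall strategy — differentiate under the integral, observe that each $\partial_u$ pulls out a factor $1/u$ from both $R_m$ and $L^{\ell-m}$, and then reduce to bounding a family of contour integrals modeled on the $\nu=0$ case — is the same spirit as the paper, and your observation that the naive induction $\mathscr{E}_\ell'(u,\kappa)=\kappa\mathscr{E}_\ell(u,\kappa-1)-T_\kappa(u)$ loses a power of $u$ is correct. The final Leibniz assembly also matches the paper's.

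The gap is in your step 3, and it is larger than ``mechanical.'' You claim that $|J_m^{[k]}(u)|\ll\log^{\ell-m}(eu)/u^{\kappa+1}$ follows from re-running Franze's Lemma 12 with the \emph{only change} being $|E_{n,m}|\le m^n/n!$ replaced by $|\alpha_n|\le P(n)m^n/n!$. That misstates what is needed, for two reasons. First, a direct termwise bound of your $J_m^{[k]}$ on the range $|s|<u$ fails: one has $|f(u,s)|\ll |s|^{\kappa+1}/u^{\kappa+1}$ and $|L|\ll 1+\log u$, so the integrand is $\ll(1+\log u)^{p}/u^{\kappa+1}$, and integrating $|ds|$ over $\{\Re s=1,\ |s|<u\}$ contributes a factor $u$, giving $\log^{\ell-m}(eu)/u^{\kappa}$ — off by exactly one power of $u$. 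The paper recovers this lost power by integrating by parts in $s$ \emph{before} differentiating (producing $\mathcal{I}_1,\mathcal{I}_2,\mathcal{I}_3$ with $ds/s^{\kappa+2}$), which converts the lossy $\int_{\Gamma_1}|ds|$ into $\int_{\Gamma_1}|ds|/|s|\ll 1+\log u$. Whatever mechanism Franze's Lemma 12 uses to save the $\nu=0$ case must be carried through, and once you integrate by parts a new integrand $s\,\partial_s R_m$ appears (the paper's Lemma \ref{I_2_Lemma}) — that is not captured by merely replacing the coefficient bound. Second, your ``Taylor tail supported on $n\ge\kappa+1$'' description is only useful on $|s|<u$; on $|s|>u$ the tail is not small and one must instead write $R_m=G(u,s)^m-\sum_{n\le\kappa}E_{n,m}(s/u)^n$ and bound $\partial_u^a\big(G(u,s)^m\big)$. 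The paper does this via Fa\`a di Bruno (Lemmas \ref{SimpleGDer_Lemma}--\ref{GMixedDer_Lemma}); those estimates are genuinely new relative to Franze's $\nu=0$ argument and are invisible in your outline. So the proposal is a sound skeleton but leaves the load-bearing estimate unproved and mischaracterizes what proving it requires.
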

\begin{proof}
First, integrate \eqref{Error_Term} by parts so that
\begin{equation}\label{EinterF}
  \mathscr{E}_{\ell}(u,\kappa)=\frac{\kappa!}{\ell!}\sum_{m=0}^{\ell}\binom{\ell}{m}\left(\mathcal{I}_{1}+\mathcal{I}_{2}+\mathcal{I}_{3}\right),
\end{equation}
where,
\begin{align*}
  \mathcal{I}_{1}&=\frac{u^{\kappa}}{2\pi i}\int_{(c)}e^{s}\frac{L\left(u,s\right)^{\ell-m-1}}{s}\left(\frac{(\ell-m)R_{m}(u,s)}{s^{\kappa+1}}\right)ds,\\
  \mathcal{I}_{2}&=\frac{u^{\kappa}}{2\pi i}\int_{(c)}e^{s}\frac{L\left(u,s\right)^{\ell-m}}{s}\left(\frac{-s\frac{\partial}{\partial s}R_{m}(u,s)}{s^{\kappa+1}}\right)ds,\\
  \mathcal{I}_{3}&=\frac{u^{\kappa}}{2\pi i}\int_{(c)}e^{s}\frac{L\left(u,s\right)^{\ell-m}}{s}\left(\frac{(\kappa+1)R_{m}(u,s)}{s^{\kappa+1}}\right)ds,
\end{align*}
and we have abbreviated $\int_{(c)}:=\int_{c-i\infty}^{c+i\infty}$, as well as
\begin{equation}\label{L_abbrev}
  L(u,s):=\log u-\log s-\gamma.
\end{equation}
Differentiating equation \eqref{EinterF} with respect to $u$ then gives
\begin{equation}\label{Einter}
  \mathscr{E}_{\ell}^{(\nu)}(u,\kappa)=\frac{\kappa!}{\ell!}\sum_{m=0}^{\ell}\binom{\ell}{m}\left(\mathcal{I}_{1}^{(\nu)}+\mathcal{I}_{2}^{(\nu)}+\mathcal{I}_{3}^{(\nu)}\right).
\end{equation}
To clear any confusion, all derivatives will be taken with respect to $u$, unless otherwise stated. The bound of Lemma \ref{L_Error_Derivatives} then follows if
\begin{equation*}
  \mathcal{I}^{(\nu)}_1, \mathcal{I}^{(\nu)}_2, \mathcal{I}^{(\nu)}_3\ll_{\kappa,\ell,\nu,m}\frac{\log^\ell eu}{u^{\nu+1}},
\end{equation*} 
upon inserting these bounds into equation \eqref{Einter} and summing over $m$.

It is worth pointing out that all of these integrals vanish when $m=0$ since $R_m=0$ in that case. Here we consider only $\mathcal{I}_{1}^{(\nu)}$ since the other two integrals are handled similarly. Also, observe that $\mathcal{I}_{1}^{(\nu)}$ does not appear unless $\ell\ge2$. Now, using Leibniz's formula twice, we have
\begin{equation}\label{I1_Leibniz}
  \mathcal{I}_{1}^{(\nu)}=\sum_{n_1=0}^{\nu}\sum_{n_2=0}^{n_1}\binom{\nu}{n_1}\binom{n_1}{n_2}\left(u^{\kappa}\right)^{(\nu-n_1)}\mathcal{J}_{1}(n_1,n_2),
\end{equation}
where
\begin{equation}\label{JmixedDer}
  \mathcal{J}_{1}(n_1,n_2):=\frac{(\ell-m)}{2\pi i}\int_{(c)}e^s\left(L\left(u,s\right)^{\ell-m-1}\right)^{(n_1-n_2)}\left(R_m(u,s)\right)^{(n_2)}\frac{ds}{s^{\kappa+2}}.
\end{equation}
Next, we split the integral appearing in $\mathcal{J}_{1}(n_1,n_2)$ into two parts, $\Gamma_1$ and $\Gamma_2$, where
\begin{align*}
  \Gamma_1&:=\left\{s\in\mathbb{C}:\Re s=1,1\le|s|<u\right\},\\
  \Gamma_2&:=\left\{s\in\mathbb{C}:\Re s=1,|s|>u\right\}.
\end{align*}
Lemma \ref{L_der_lemma} and Lemma \ref{R_mDerLemma} then give the bound 
\begin{equation*}
  \left|\mathcal{J}_{1}(n_1,n_2)\right|\ll\int_{\Gamma_1}\frac{\left(1+\log u\right)^{\ell-m-1}}{u^{n_1-n_2}}\frac{|s|^{\kappa+1}}{u^{\kappa+1+n_2}}\frac{|ds|}{|s|^{\kappa+2}}+\int_{\Gamma_2}\frac{\left(1+\log|s|\right)^{\ell-m-1}}{u^{n_1-n_2}}\frac{|s|^{\kappa+1}}{u^{\kappa+n_2}}\frac{|ds|}{|s|^{\kappa+2}},
\end{equation*}
where the implied constant depends at most on $\kappa,\ell,n_1,n_2$ and $m$. This immediately implies that
\begin{equation}\label{intermediateJbound}
  \left|\mathcal{J}_{1}(n_1,n_2)\right|\ll_{\kappa,\ell,n_1,n_2,m}\frac{\log^{\ell} eu}{u^{\kappa+1+n_1}},
\end{equation}
since, by \cite[Lemma 9]{Franze} for example,
\begin{equation*}
  \int_{\Gamma_1}\frac{|ds|}{|s|}\ll \left(1+\log u\right),
\end{equation*}
and,
\begin{equation*}
  \int_{\Gamma_2}\frac{\left(1+\log|s|\right)^{\ell-m-1}}{|s|^{2}}|ds|\ll \frac{\left(1+\log u\right)^{\ell-m-1}}{u}.
\end{equation*}
Substituting the bound in \eqref{intermediateJbound} into equation \eqref{I1_Leibniz} and summing over $n_1$ and $n_2$, we find that
\begin{equation*}
  \mathcal{I}_{1}^{(\nu)}\ll_{\kappa,\ell,\nu,m}\sum_{n_1=1}^{\nu}\sum_{n_2=0}^{n_1}u^{\kappa-\nu+n_1}\left(\frac{\log^{\ell}eu}{u^{\kappa+1+n_1}}\right)\ll_{\kappa,\ell,\nu,m}\frac{\log^{\ell}eu}{u^{\nu+1}},
\end{equation*}
completing the proof. As remarked earlier, the bound for $\mathcal{I}^{(\nu)}_2$ and $\mathcal{I}^{(\nu)}_3$ follows similarly, although Lemma \ref{I_2_Lemma} is additionally needed for $\mathcal{I}^{(\nu)}_2$.
\end{proof}
The proof of Theorem \ref{Main} now easily follows.
\begin{proof}[Proof of Theorem \ref{Main}]
Setting $\nu=\kappa$ in Lemma \ref{DiffLemma2}, we find that
\begin{equation}\label{Lemma4REMARK}
  \widetilde{K}_{\ell}^{(\kappa)}(u,\kappa)=\sum_{j=0}^{\kappa}\sum_{m=0}^{\ell}\sum_{r=0}^{\ell-m}
    \frac{(-1)^{r}\Gamma(\kappa+1)}{m!(\ell-m-r)!}E_{\kappa-j,m}C_{r,0}\left(\log^{\ell-m-r}u\right)^{(\kappa-j)},
\end{equation}
since $(\kappa)_{\kappa}\widetilde{K}_{\ell}(u,0)$ corresponds to the term $j=\kappa$. Differentiating both sides of \eqref{MT_ET_Identity} gives
\begin{align}\label{derKMTET}
  \frac{K_{\ell}^{(\kappa)}(u,\kappa)}{\kappa!}&=\frac{\widetilde{K}_{\ell}^{(\kappa)}(u,\kappa)}{\kappa!}+\frac{\mathscr{E}_{\ell}^{(\kappa)}(u,\kappa)}{\kappa!}.
\end{align}
The left-hand side is $K_{\ell}(u)$, by repeated use of Lemma \ref{L_diff}. Thus, substituting \eqref{Lemma4REMARK} into \eqref{derKMTET},
\begin{equation*}
  K_{\ell}(u)=\sum_{j=0}^{\kappa}\sum_{m=0}^{\ell}\sum_{r=0}^{\ell-m}
    \frac{(-1)^{r}}{m!(\ell-m-r)!}E_{\kappa-j,m}C_{r,0}\left(\log^{\ell-m-r}u\right)^{(\kappa-j)}+\frac{\mathscr{E}_{\ell}^{(\kappa)}(u,\kappa)}{\kappa!}.
\end{equation*}
Using Lemma \ref{L_Error_Derivatives} and re-indexing the sum on $j$ then gives
\begin{equation*}
  K_{\ell}(u)=\sum_{j=0}^{\kappa}\sum_{m=0}^{\ell}\sum_{r=0}^{\ell-m}
    \frac{(-1)^{r}}{m!(\ell-m-r)!}E_{j,m}C_{r,0}\left(\log^{\ell-m-r}u\right)^{(j)}+O_{\kappa,\ell}\left(\frac{\log^{\ell}eu}{u^{\kappa+1}}\right).
\end{equation*}
Thus, the proof of Theorem \ref{Main} is complete upon replacing $\kappa$ with $J$. 
\end{proof}

The observant reader will notice that we have proved Theorem \ref{Main} only for $J\ge\ell$ since \eqref{MT_ET_Identity} was dependent upon this assumption. However, this is enough since the main term appearing in Theorem \ref{Main} can be truncated to include the remaining cases where $0\le J<\ell$ using the observation that the $j$-th term of the sum in Theorem \ref{Main} is $O\left(\frac{\log^{\ell}eu}{u^{j}}\right)$.

\section{Lemmata}
The results of this section are used to prove Lemma \ref{L_Error_Derivatives}. Throughout this section we will make extensive use of Fa\`a di Bruno's formula,
\begin{equation}\label{FDB}
  \frac{d^{n}}{du^{n}}f\left(g(u))\right)=\sum_{k=1}^{n}f^{(k)}\left(g(u)\right)B_{n,k}\left(g'(x),\ldots,g^{(n-k+1)}(x)\right),
\end{equation}
where $B_{n,k}$ denotes the Bell polynomial,
\begin{equation}\label{BellY}
  B_{n,k}\left(x_1,\ldots,x_{n-k+1}\right):=\sum_{\vec{j}\in\mathcal{S}_{n,k}}\frac{n!}{j_1!\cdots j_{n-k+1}!}\prod_{i=1}^{n-k+1}\left(\frac{x_i}{i!}\right)^{j_i},
\end{equation}
and the sum is taken over the elements of
\begin{equation}\label{BellY_Set}
  \mathcal{S}_{n,k}:=\left\{(j_1,\ldots,j_{n-k+1})\in\mathbb{Z}^{n-k+1}:j_i\ge0,\sum_{i=1}^{n-k+1}i\ j_{i}=n,\sum_{i=1}^{n-k+1}j_{i}=k\right\}.
\end{equation}
To begin, we use this formula to establish a bound on the derivatives of the function $L(u,s)$ defined in \eqref{L_abbrev}.
\begin{lemma}\label{L_der_lemma}
  Suppose that $\Re s=1$, and that $u\ge1$ . For integers $\nu\ge0$ and $\ell\ge1$,
  \begin{equation*}
    \left|\frac{\partial^{\nu}}{\partial u^{\nu}}L(u,s)^{\ell}\right|\ll_{\nu,\ell}
    \begin{cases}
      \dfrac{\left(1+\log u\right)^{\ell}}{u^{\nu}},& \text{if $|s|<u$,}\\
      \dfrac{\left(1+\log |s|\right)^{\ell}}{u^{\nu}},& \text{if $|s|>u$.}
    \end{cases}
  \end{equation*}
\end{lemma}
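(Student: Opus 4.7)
The plan is to apply Faà di Bruno's formula \eqref{FDB} with outer function $f(x)=x^{\ell}$ and inner function $g(u)=L(u,s)=\log u-\log s-\gamma$; the case $\nu=0$ follows directly from the bound on $|L|$ established below. For $\nu\ge 1$, the derivatives $g^{(i)}(u)=(-1)^{i-1}(i-1)!/u^{i}$ are independent of $s$, so each monomial in $B_{\nu,k}(g'(u),\ldots,g^{(\nu-k+1)}(u))$ has modulus $\prod_{i}(i\,u^{i})^{-j_{i}}$, which equals a constant times $u^{-\nu}$ thanks to the defining constraint $\sum_{i}i\,j_{i}=\nu$ in \eqref{BellY_Set}. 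Since $\mathcal{S}_{\nu,k}$ is finite, this yields $|B_{\nu,k}(\ldots)|\ll_{\nu,k}u^{-\nu}$, and hence
\begin{equation*}
  \left|\frac{\partial^{\nu}}{\partial u^{\nu}}L(u,s)^{\ell}\right|
  \;\ll_{\nu,\ell}\;\frac{1}{u^{\nu}}\sum_{k=1}^{\min(\nu,\ell)}|L(u,s)|^{\ell-k}.
\end{equation*}

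It then remains to bound $|L(u,s)|$ on the line $\Re s=1$. Writing $\log s=\log|s|+i\arg s$, on this line $|s|\ge 1$ and $|\arg s|\le\pi/2$, so $|\log s|\le\log|s|+\pi/2$. Therefore $|L(u,s)|\le\log u+\log|s|+\gamma+\pi/2$, which dichotomizes into
\begin{equation*}
  |L(u,s)|\ll\begin{cases}1+\log u & \text{if }|s|<u,\\ 1+\log|s| & \text{if }|s|>u.\end{cases}
\end{equation*}
Since $u\ge 1$ forces $1+\log u\ge 1$ (and likewise $1+\log|s|\ge 1$ on $\Re s=1$), each term $|L(u,s)|^{\ell-k}$ is dominated by $(1+\log u)^{\ell}$ or $(1+\log|s|)^{\ell}$ in the respective regions. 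Substituting into the Faà di Bruno bound and absorbing the finite sum over $k$ into the implied constant produces the lemma.

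The whole argument is essentially elementary; the only real bookkeeping is the Bell polynomial estimate, where the constraint $\sum_{i}i\,j_{i}=\nu$ conspires to produce exactly the required $u^{-\nu}$ scaling. The stated exponent $(1+\log u)^{\ell}$ is slightly weaker than necessary for $\nu\ge 1$ (where $(1+\log u)^{\ell-1}$ would suffice) but is what allows a single statement to cover the $\nu=0$ case as well, and it is amply sufficient for the applications to Lemma \ref{L_Error_Derivatives}.
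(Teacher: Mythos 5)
Your proof is correct and takes essentially the same approach as the paper: Fa\`a di Bruno's formula combined with the observation that the constraint $\sum_{i}i\,j_{i}=\nu$ in \eqref{BellY_Set} forces each monomial of $B_{\nu,k}$, evaluated at the derivatives $g^{(i)}(u)=(-1)^{i-1}(i-1)!/u^{i}$, to scale exactly as $u^{-\nu}$. The only minor difference is that you derive the base estimate $|L(u,s)|\ll 1+\log\max(u,|s|)$ on $\Re s=1$ directly from $|\log s|\le\log|s|+\pi/2$, whereas the paper simply cites \cite[Lemma 10]{Franze} for that bound.
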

\begin{proof}
  If $\nu=0$, the bound follows immediately from \cite[Lemma 10]{Franze},
  \begin{equation}\label{Old_L_bound}
    \left|L(u,s)\right|\ll
    \begin{cases}
      1+\log u,& \text{if $|s|<u$,}\\
      1+\log|s|,& \text{if $|s|>u$.}
    \end{cases}
  \end{equation}
  If $\nu\ge1$, an application of \eqref{FDB} gives
  \begin{equation*}
    \frac{\partial^{\nu}}{\partial u^{\nu}}L(u,s)^{\ell}=\sum_{j=1}^{\nu}(\ell)_{j}L(u,s)^{\ell-j}B_{\nu,j}\left(\frac{\partial}{\partial u}L(u,s),\ldots,\frac{\partial^{\nu-j+1}}{\partial u^{\nu-j+1}}L(u,s)\right).
  \end{equation*}
  Using \eqref{BellY}, the Bell polynomial, $B_{\nu,j}$, appearing above is equal to $D_{\nu,j}/u^{\nu}$ for some constant $D_{\nu,j}$. Making this substitution, the bound then follows easily from \eqref{Old_L_bound}.
\end{proof}
Next, we establish some bounds involving the function $G(u,s)$ defined in \eqref{G_Def}. For example, it was shown in \cite{Franze} that $G(u,s)$ satisfies
\begin{lemma}\label{SimpleG_Lemma}
  Suppose that $\Re s=1$, and that $u\ge1$. Then we have
  \begin{equation*}
    |G(u,s)|\ll
    \begin{cases}
      \frac{|s|}{u}, &\text{if $|s|<u$},\\
      1+\log \left(\frac{|s|}{u}\right), &\text{if $|s|>u$.}
    \end{cases}
  \end{equation*}
\end{lemma}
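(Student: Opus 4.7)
The plan is to estimate $G(u,s) = \int_0^{1/u} \frac{1-e^{-ts}}{t}\,dt$ directly, distinguishing the two regimes by the relative size of $|s|$ and $u$. The key analytic input is the elementary inequality
\[
|1-e^{-z}| \le |z| \quad \text{whenever } \Re z \ge 0,
\]
which follows from the integral representation $1 - e^{-z} = \int_0^1 z e^{-\tau z}\,d\tau$ and the fact that $|e^{-\tau z}| = e^{-\tau \Re z} \le 1$. Since $\Re s = 1$ and $t>0$, this inequality applies pointwise on the integration path with $z = ts$.

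For the case $|s| < u$, I would apply this bound on the entire interval $(0, 1/u)$. The awkward factor $1/t$ cancels, yielding
\[
|G(u,s)| \le \int_0^{1/u} \frac{|ts|}{t}\,dt = \frac{|s|}{u},
\]
which is the asserted estimate.

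For the case $|s| > u$, the naive bound becomes wasteful near the upper endpoint, so I would split the integral at $t = 1/|s|$, which now lies inside $(0, 1/u)$. On $(0, 1/|s|)$ the pointwise bound above gives a contribution of size at most $\int_0^{1/|s|} |s|\,dt = 1$. On $(1/|s|, 1/u)$, I would instead split the integrand as
\[
\frac{1-e^{-ts}}{t} = \frac{1}{t} - \frac{e^{-ts}}{t}.
\]
Integrating the first term gives exactly $\log(|s|/u)$, while, using $|e^{-ts}| = e^{-t}$ (since $\Re s = 1$) together with $e^{-t} \le 1$, the second term contributes at most $\int_{1/|s|}^{1/u} t^{-1}\,dt = \log(|s|/u)$. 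Adding up yields $|G(u,s)| \ll 1 + \log(|s|/u)$.

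The argument is essentially routine once the correct splitting point is selected; there is no real obstacle. The only conceptual point is recognizing that the global bound $|1-e^{-ts}| \le |ts|$ is sharp only when $|ts| \lesssim 1$, which is precisely what dictates the split at $t = 1/|s|$ in the second case.
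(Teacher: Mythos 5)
Your proof is correct and is in substance the same as the paper's. The paper deduces the lemma from Soundararajan's estimate $|G(u,s)|\ll\int_0^{1/u}\min\bigl(|s|,1/t\bigr)\,dt$, whose proof is exactly your split: the pointwise bound $|1-e^{-ts}|\le |ts|$ for small $t$ and the trivial bound $|1-e^{-ts}|\le 2$ (or your equivalent decomposition $1/t - e^{-ts}/t$) for larger $t$, with the crossover at $t=1/|s|$; you have simply unwound the cited estimate and carried out the elementary evaluation yourself.
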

\begin{proof}
  The bound can easily be deduced using Soundararajan's estimate \cite[p.29]{Sound},
  \begin{equation*}
    |G(u,s)|\ll\int_{0}^{1/u}\min\left(|s|,\frac{1}{t}\right)dt.
  \end{equation*}
  See \cite[Lemma 5]{Franze} for more detail.
\end{proof}
Moreover, for derivatives of $G(u,s)$ we have
\begin{lemma}\label{SimpleGDer_Lemma}
  Suppose that $\Re s=1$, $u\ge1$, and that $|s|>u$. For integers $\nu\ge1$,
  \begin{equation*}
    \left|\frac{\partial^{\nu}}{\partial u^{\nu}}G(u,s)\right|\ll_{\nu}\frac{|s|^{\nu-1}}{u^{2\nu-1}}
  \end{equation*}
\end{lemma}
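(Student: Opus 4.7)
The plan is to reduce higher derivatives to a Leibniz-plus-Faà-di-Bruno calculation after first applying the fundamental theorem of calculus to peel off one derivative in $u$. Specifically, since $G(u,s) = \int_0^{1/u}\frac{1-e^{-ts}}{t}\,dt$, differentiating the upper limit gives
\begin{equation*}
  \frac{\partial}{\partial u}G(u,s) = -\frac{1-e^{-s/u}}{u} = \frac{e^{-s/u}-1}{u}.
\end{equation*}
So for $\nu\ge1$ it suffices to bound $\frac{\partial^{\nu-1}}{\partial u^{\nu-1}}$ applied to the product $u^{-1}(e^{-s/u}-1)$, and I would apply Leibniz's rule to write
\begin{equation*}
  \frac{\partial^{\nu}}{\partial u^{\nu}}G(u,s) = \sum_{k=0}^{\nu-1}\binom{\nu-1}{k}\left(\frac{1}{u}\right)^{(\nu-1-k)}\left(e^{-s/u}-1\right)^{(k)},
\end{equation*}
which gives a clean splitting into an elementary factor (with $|(1/u)^{(m)}| = m!/u^{m+1}$) and a factor requiring Faà di Bruno.

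The main work is estimating $g^{(k)}(u):=\frac{d^k}{du^k}e^{-s/u}$ for $k\ge1$. I would apply Fa\`a di Bruno's formula \eqref{FDB} with outer function $e^{x}$ and inner function $h(u)=-s/u$, noting that $h^{(i)}(u)=(-1)^{i}i!\,s/u^{i+1}$. By the homogeneity structure of the Bell polynomial \eqref{BellY}, each term of $B_{k,j}(h'(u),\ldots,h^{(k-j+1)}(u))$ is a product $\prod(h^{(i)}(u)/i!)^{j_i}$ with $\sum j_i=j$ and $\sum i\,j_i=k$, so the total power of $s$ is $j$ and the total power of $1/u$ is $\sum (i+1)j_i = k+j$. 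Hence for some constants $c_{k,j}$,
\begin{equation*}
  g^{(k)}(u) = e^{-s/u}\sum_{j=1}^{k} c_{k,j}\,\frac{s^{j}}{u^{k+j}}.
\end{equation*}
Since $\Re s = 1$, we have $|e^{-s/u}|=e^{-1/u}\le 1$, and since $|s|>u$, the term with the largest exponent $j=k$ dominates, giving $|g^{(k)}(u)|\ll_{k}|s|^{k}/u^{2k}$; the same bound holds trivially at $k=0$ from $|e^{-s/u}-1|\le 2$.

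Substituting these bounds into the Leibniz expansion yields
\begin{equation*}
  \left|\frac{\partial^{\nu}}{\partial u^{\nu}}G(u,s)\right| \ll_{\nu} \sum_{k=0}^{\nu-1}\frac{1}{u^{\nu-k}}\cdot\frac{|s|^{k}}{u^{2k}} = \sum_{k=0}^{\nu-1}\frac{|s|^{k}}{u^{\nu+k}},
\end{equation*}
and under the hypothesis $|s|>u$, this geometric-like sum is dominated by its last term $|s|^{\nu-1}/u^{2\nu-1}$, completing the proof. The main technical obstacle is keeping track of the $s$- and $u$-powers in the Bell polynomial expansion and verifying that the $j=k$ term dominates under $|s|>u$; everything else is mechanical.
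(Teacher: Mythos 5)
Your proof is correct and follows essentially the same route as the paper: peel off one derivative via the fundamental theorem of calculus, apply Leibniz's rule to $u^{-1}(e^{-s/u}-1)$, use Fa\`a di Bruno on $e^{-s/u}$ to get $|\partial_u^k e^{-s/u}| \ll_k |s|^k/u^{2k}$, and note that under $|s|>u$ the top term $k=\nu-1$ dominates the Leibniz sum. The only cosmetic difference is that the paper writes the $n=0$ Leibniz term separately rather than inside the sum, but the estimates and the use of the homogeneity of the Bell polynomials are identical.
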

\begin{proof}
  If $\nu=1$, differentiating \eqref{G_Def} gives
  \begin{equation*}
    \frac{\partial}{\partial u}G(u,s)=\frac{e^{-s/u}-1}{u},
  \end{equation*}
  and the bound follows since $|e^{-s/u}|\le1$. If $\nu\ge2$, applying the Leibniz rule gives
  \begin{equation*}
    \frac{\partial^{\nu}}{\partial u^{\nu}}G(u,s)=\sum_{n=1}^{\nu-1}\binom{\nu-1}{n}\frac{\partial^{n}}{\partial u^{n}}\left(e^{-s/u}\right)\frac{\partial^{\nu-1-n}}{\partial u^{\nu-1-n}}\left(\frac{1}{u}\right)+\left(e^{-s/u}-1\right)\frac{\partial^{\nu-1}}{\partial u^{\nu-1}}\left(\frac{1}{u}\right).
  \end{equation*}
  The bound stated in the lemma follows since
  \begin{equation}\label{Easy_u_der}
    \left|\frac{\partial^{\nu-1-n}}{\partial u^{\nu-1-n}}\left(\frac{1}{u}\right)\right|\ll_{\nu,n} \frac{1}{u^{\nu-n}}.
  \end{equation}
  and
  \begin{equation}\label{Easy_exp_der}
    \left|\frac{\partial^{n}}{\partial u^{n}}\left(e^{-s/u}\right)\right|\ll_{n}\frac{|s|^{n}}{u^{2n}}.
  \end{equation}
  While \eqref{Easy_u_der} is obvious, \eqref{Easy_exp_der} can be established by \eqref{FDB} since 
  \begin{equation}\label{Exp_Der_1}
    \frac{\partial^{n}}{\partial u^{n}}\left(e^{-s/u}\right)=\sum_{k=1}^{n}e^{-s/u}B_{n,k}\left(\frac{s}{u^2},-\frac{2!s}{u^3},\ldots,(-1)^{n-k}\frac{(n-k+1)!s}{u^{n-k+2}}\right),
  \end{equation}
  and, recalling \eqref{BellY} and \eqref{BellY_Set},
  \begin{equation}\label{Exp_Der_2}
    \left|B_{n,k}\left(\frac{s}{u^2},-\frac{2!s}{u^3},\ldots,(-1)^{n-k}\frac{(n-k+1)!s}{u^{n-k+2}}\right)\right|\ll_{n}\sum_{\vec{j}\in\mathcal{S}_{n,k}}\prod_{i=1}^{n-k+1}\left(\frac{s}{u^{i+1}}\right)^{j_{i}}\ll_{n,k}\frac{|s|^{k}}{u^{k+n}}.
  \end{equation}
\end{proof}
For the derivatives of $G(u,s)^{m}$, we have
\begin{lemma}\label{GPowerDer_Lemma}
  Suppose that $\Re s=1$, $u\ge1$, and that $|s|>u$. For integers $m,\nu\ge1$,
  \begin{equation*}
    \frac{\partial^\nu}{\partial u^\nu}\left(G(u,s)^m\right)\ll_{m,\nu}\dfrac{\left(1+\log|s|\right)^{m-1}}{u^\nu}\left(\frac{|s|}{u}\right)^{\nu-1}.
  \end{equation*}
\end{lemma}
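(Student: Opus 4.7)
The plan is to combine Fa\`a di Bruno's formula \eqref{FDB} with the bounds already established in Lemma \ref{SimpleG_Lemma} and Lemma \ref{SimpleGDer_Lemma}. Taking $f(x)=x^m$ and $g(u)=G(u,s)$, and noting that $f^{(k)}(x)=(m)_k x^{m-k}$ vanishes for $k>m$, formula \eqref{FDB} yields
\begin{equation*}
  \frac{\partial^\nu}{\partial u^\nu}G(u,s)^m = \sum_{k=1}^{\min(m,\nu)} (m)_k\, G(u,s)^{m-k}\, B_{\nu,k}\!\left(\frac{\partial G}{\partial u},\ldots,\frac{\partial^{\nu-k+1}G}{\partial u^{\nu-k+1}}\right),
\end{equation*}
so it suffices to bound each summand on the right uniformly in $k$.

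For the scalar factor $G(u,s)^{m-k}$, since $|s|>u\ge 1$ Lemma \ref{SimpleG_Lemma} gives $|G(u,s)|\ll 1+\log(|s|/u)\le 1+\log|s|$, whence $|G(u,s)|^{m-k}\ll(1+\log|s|)^{m-k}\le(1+\log|s|)^{m-1}$ for every $k\ge 1$. For the derivative factors appearing inside $B_{\nu,k}$, Lemma \ref{SimpleGDer_Lemma} supplies $|\partial^{i} G/\partial u^{i}|\ll |s|^{i-1}/u^{2i-1}$ for each $i\ge 1$. Inserting these into the definition \eqref{BellY}, every multi-index $\vec{j}\in\mathcal{S}_{\nu,k}$ contributes a product of order
\begin{equation*}
  \prod_{i=1}^{\nu-k+1}\left(\frac{|s|^{i-1}}{u^{2i-1}}\right)^{j_i}=\frac{|s|^{\sum_i(i-1)j_i}}{u^{\sum_i(2i-1)j_i}}=\frac{|s|^{\nu-k}}{u^{2\nu-k}},
\end{equation*}
where the exponents collapse using the constraints $\sum_i i\,j_i=\nu$ and $\sum_i j_i=k$ built into $\mathcal{S}_{\nu,k}$.

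Assembling these two pieces, the $k$-th term of the Fa\`a di Bruno sum is bounded by
\begin{equation*}
  (1+\log|s|)^{m-k}\cdot\frac{|s|^{\nu-k}}{u^{2\nu-k}}=\frac{(1+\log|s|)^{m-1}|s|^{\nu-1}}{u^{2\nu-1}}\cdot(1+\log|s|)^{1-k}\left(\frac{u}{|s|}\right)^{k-1}.
\end{equation*}
Since $|s|/u>1$ and $k\ge 1$, the last two factors are each at most $1$, so every term is $\ll_{m,\nu}(1+\log|s|)^{m-1}|s|^{\nu-1}/u^{2\nu-1}$, which is precisely $(1+\log|s|)^{m-1}u^{-\nu}(|s|/u)^{\nu-1}$. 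Summing the finitely many terms $1\le k\le\min(m,\nu)$ costs only a constant depending on $m$ and $\nu$ and finishes the proof.

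The only real bookkeeping is the exponent computation inside the Bell polynomial, which is immediate from the defining relations of $\mathcal{S}_{\nu,k}$; the $k=1$ term dominates precisely because $|s|/u>1$ in the regime of interest. Everything else is a direct combination of the two preceding lemmas, so I do not anticipate any serious obstacle.
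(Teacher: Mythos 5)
Your proposal is correct and takes essentially the same approach as the paper: apply Fa\`a di Bruno's formula to $f(x)=x^m$, $g(u)=G(u,s)$, bound the outer factor $G(u,s)^{m-k}$ by Lemma \ref{SimpleG_Lemma}, bound the Bell polynomial by Lemma \ref{SimpleGDer_Lemma} via the constraints in $\mathcal{S}_{\nu,k}$, and combine. The paper's write-up leaves the final combination a bit terser; your explicit verification that the $k=1$ term dominates (because $(1+\log|s|)^{1-k}$ and $(u/|s|)^{k-1}$ are both at most $1$ in the regime $|s|>u\ge1$) is the same bookkeeping the paper implicitly relies on, so there is no substantive difference.
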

\begin{proof}
  Applying equation \eqref{FDB}, it is apparent that we must bound
  \begin{equation}\label{NBD1}
    \sum_{k=1}^{\nu}(m)_{k}G(u,s)^{m-k}B_{\nu,k}\left(\frac{\partial}{\partial u}G(u,s),\frac{\partial^{2}}{\partial u^{2}}G(u,s),\ldots,\frac{\partial^{\nu-k+1}}{\partial u^{\nu-k+1}}G(u,s)\right).
  \end{equation}
  Using Lemma \ref{SimpleGDer_Lemma} and recalling \eqref{BellY_Set}, the Bell polynomial above is bounded by
  \begin{equation}\label{NBD2}
    \left|B_{\nu,k}\left(\frac{\partial}{\partial u}G,\frac{\partial^{2}}{\partial u^{2}}G,\ldots,\frac{\partial^{\nu-k+1}}{\partial u^{\nu-k+1}}G\right)\right|\ll_{\nu,k}\sum_{\vec{j}\in\mathcal{S}_{\nu,k}}\prod_{i=1}^{\nu-k+1}\left(\frac{|s|^{i-1}}{u^{2i-1}}\right)^{j_i}\ll_{\nu,k}\frac{|s|^{\nu-k}}{u^{2\nu-k}},
  \end{equation}
  where we have abbreviated $G:=G(u,s)$. The proof is concluded by combining \eqref{NBD1}, \eqref{NBD2}, and Lemma \ref{SimpleG_Lemma}.
\end{proof}
On the other hand, for the derivatives of $\frac{\partial}{\partial s}G(u,s)$, we have
\begin{lemma}\label{GMixedDer_Lemma}
  Suppose $\Re s=1$, $u\ge1$, and that $|s|>u$. For integers $\nu\ge0$,
  \begin{equation*}
    \left|\frac{\partial^{\nu}}{\partial u^{\nu}}\left(s\frac{\partial}{\partial s}G(u,s)\right)\right|\ll_{\nu}\dfrac{|s|^{\nu}}{u^{2\nu}}.
  \end{equation*}
\end{lemma}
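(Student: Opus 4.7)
The plan is to observe that $s\frac{\partial}{\partial s}G(u,s)$ admits an especially simple closed form, which collapses the lemma onto bounds already established for derivatives of $e^{-s/u}$. Specifically, differentiating the integral definition \eqref{G_Def} under the integral sign with respect to $s$ gives
\begin{equation*}
\frac{\partial}{\partial s}G(u,s)=\int_{0}^{1/u}e^{-ts}\,dt=\frac{1-e^{-s/u}}{s},
\end{equation*}
so that $s\frac{\partial}{\partial s}G(u,s)=1-e^{-s/u}$. Thus the task is reduced to estimating $\frac{\partial^{\nu}}{\partial u^{\nu}}\bigl(1-e^{-s/u}\bigr)$ under the hypotheses $\Re s=1$, $u\ge 1$, $|s|>u$.

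Next I would split into two cases. For $\nu=0$ the claimed bound is just $|s|^{0}/u^{0}=1$, and indeed $|1-e^{-s/u}|\le 1+e^{-\Re(s)/u}=1+e^{-1/u}\le 2$. For $\nu\ge 1$ the constant $1$ contributes nothing, so it suffices to show
\begin{equation*}
\left|\frac{\partial^{\nu}}{\partial u^{\nu}}e^{-s/u}\right|\ll_{\nu}\frac{|s|^{\nu}}{u^{2\nu}},
\end{equation*}
which is precisely the bound \eqref{Easy_exp_der} that was proved in the course of Lemma \ref{SimpleGDer_Lemma} via Faà di Bruno's formula \eqref{FDB} applied to $e^{-s/u}$, together with the Bell-polynomial estimate \eqref{Exp_Der_2}. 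Invoking that bound directly concludes the argument.

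There is no substantive obstacle here: the only real content is the algebraic simplification $s\partial_{s}G=1-e^{-s/u}$, after which the lemma is a direct corollary of work already carried out. The minor bookkeeping point is merely to verify that the $\nu=0$ and $\nu\ge 1$ estimates combine cleanly into the single stated bound $|s|^{\nu}/u^{2\nu}$, which they do since both are dominated by this expression.
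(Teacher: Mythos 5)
Your proof is correct and matches the paper's argument essentially verbatim: the paper also observes $s\,\partial_s G(u,s)=1-e^{-s/u}$, handles $\nu=0$ directly, and for $\nu\ge 1$ invokes the bound \eqref{Easy_exp_der} already established in the proof of Lemma \ref{SimpleGDer_Lemma}.
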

\begin{proof}
  If $\nu=0$, the bound in the lemma is obvious since
  \begin{equation*}
    s \frac{\partial}{\partial s}G(u,s)=1-e^{-s/u}.
  \end{equation*}
  If $\nu\ge1$, the bound follows by \eqref{Easy_exp_der} since
  \begin{equation*}
    \left|\frac{\partial^{\nu}}{\partial u^{\nu}}\left(s\frac{\partial}{\partial s}G(u,s)\right)\right|=\left|\frac{\partial^{\nu}}{\partial u^{\nu}}\left(e^{-s/u}\right)\right|.
  \end{equation*}
\end{proof}
Finally, we establish bounds on the derivatives of $R_m(u,s)$ and $\frac{\partial}{\partial s}R_m(u,s)$.
\begin{lemma}\label{R_mDerLemma}
  Suppose that $\Re s=1$, and that $u\ge1$. For integers $1\le m\le\ell$ and $0\le\nu\le\kappa$,
  \begin{equation*}
    \left|\frac{\partial^\nu}{\partial u^\nu}R_m(u,s)\right|\ll_{m,\nu,\kappa}
    \begin{cases}
      \dfrac{|s|^{\kappa+1}}{u^{\kappa+1+\nu}}&\text{if $|s|<u$},\\
      \dfrac{|s|^{\kappa}}{u^{\kappa+\nu}}&\text{if $|s|>u$.}
    \end{cases}
  \end{equation*}
\end{lemma}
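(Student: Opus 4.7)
The proof splits naturally into the two regimes appearing in the statement: $|s|<u$ and $|s|>u$. In the first regime I would work directly with the defining series $R_m(u,s) = \sum_{n \ge \kappa+1} E_{n,m}(s/u)^n$; in the second I would use the identity $R_m(u,s) = G(u,s)^m - \sum_{n=m}^{\kappa} E_{n,m}(s/u)^n$ (with the finite sum interpreted as empty when $m > \kappa$).

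For $|s|<u$, since $G(u,s)^m$ is entire in $s$, the tail series for $R_m$ converges absolutely and can be differentiated termwise in $u$. Computing $\partial_u^\nu(s/u)^n = (-1)^\nu \frac{(n+\nu-1)!}{(n-1)!} s^n/u^{n+\nu}$ and invoking the coefficient estimate $|E_{n,m}| \le m^n/n!$ from \eqref{wasteful_E_bound}, one obtains
\[
\bigl|\partial_u^\nu R_m(u,s)\bigr| \le \frac{|s|^{\kappa+1}}{u^{\kappa+1+\nu}} \sum_{n \ge \kappa+1} \frac{m^n (n+\nu-1)!}{n!(n-1)!}\left(\frac{|s|}{u}\right)^{n-\kappa-1}.
\]
Since $|s|/u < 1$, the residual sum is bounded by $\sum_{n \ge \kappa+1} m^n n^{\nu-1}/n!$, a convergent series depending only on $m$ and $\nu$. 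For $|s|>u$, differentiating the closed-form identity gives $\partial_u^\nu R_m = \partial_u^\nu G^m - \sum_{n=m}^{\kappa} E_{n,m}\,\partial_u^\nu(s/u)^n$. The polynomial part satisfies $|\partial_u^\nu(s/u)^n| \ll_\nu |s|^n/u^{n+\nu} \le |s|^\kappa/u^{\kappa+\nu}$ because $n \le \kappa$ and $|s|/u>1$, while $\partial_u^\nu G^m$ is controlled by Lemma \ref{GPowerDer_Lemma} when $\nu \ge 1$ and by Lemma \ref{SimpleG_Lemma} when $\nu = 0$.

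The main technical point is verifying that the $\partial_u^\nu G^m$ contribution actually fits within the target $|s|^\kappa/u^{\kappa+\nu}$. Setting $w := |s|/u > 1$, the relevant comparison reduces to a factor of the form $(1+\log w)^{m-1} w^{\nu-1-\kappa}$, and the hypothesis $\nu \le \kappa$ forces the exponent $\kappa-\nu+1$ to be at least $1$, so the polynomial decay in $w$ outpaces the logarithmic growth uniformly on $w > 1$. Once this reconciliation of logarithmic and polynomial factors is carried out, the bounds from the two regimes combine to yield the lemma.
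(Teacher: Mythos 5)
Your proposal follows the paper's proof essentially verbatim: for $|s|<u$ you differentiate the tail series \eqref{R_tail} termwise and use \eqref{wasteful_E_bound} for convergence, and for $|s|>u$ you use the closed form coming from \eqref{R_head} and control $\partial_u^\nu(G^m)$ via the $G$-lemmas (the paper cites Lemma~\ref{SimpleGDer_Lemma} and implicitly Fa\`a di Bruno; you cite Lemma~\ref{GPowerDer_Lemma}, which packages the same computation).

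One small caution: your reduction "to a factor of the form $(1+\log w)^{m-1}w^{\nu-1-\kappa}$'' with $w=|s|/u$ requires the bound on $\partial_u^\nu(G^m)$ to carry the factor $\bigl(1+\log(|s|/u)\bigr)^{m-1}$, not the $\bigl(1+\log|s|\bigr)^{m-1}$ that appears in the statement of Lemma~\ref{GPowerDer_Lemma}. Taking Lemma~\ref{GPowerDer_Lemma} literally leaves a stray $(1+\log u)^{m-1}$ after the cancellation, so one must either appeal to the sharper form (which the proof of Lemma~\ref{GPowerDer_Lemma} actually yields, since Lemma~\ref{SimpleG_Lemma} gives $|G|\ll 1+\log(|s|/u)$), or, as the paper does, invoke Lemmas~\ref{SimpleG_Lemma} and~\ref{SimpleGDer_Lemma} directly inside Fa\`a di Bruno. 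This is easily patched but worth noting.
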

\begin{proof}
  When $|s|<u$ we use equation \eqref{R_tail} to observe that
  \begin{equation*}
    \frac{\partial^{\nu}}{\partial u^{\nu}}R_m(u,s)=\sum_{j=\kappa+1}^{\infty}(-1)^{\nu}j^{(\nu)}E_{j,m}\frac{s^{j}}{u^{j+\nu}},
  \end{equation*}
  where, for the rest of the manuscript, $j^{(\nu)}$ denotes the rising factorial,
  \begin{equation}\label{rising_factorial}
    j^{(\nu)}:=j(j+1)\cdots(j+\nu-1).
  \end{equation}
  The bound follows since the series,
  \begin{equation*}
    \sum_{j=\kappa+1}^{\infty}j^{(\nu)}|E_{j,m}|,
  \end{equation*}
  converges by the ratio test, using \eqref{wasteful_E_bound}. When $|s|>u$ we use equation \eqref{R_head}, instead, to observe that
  \begin{equation*}
    \frac{\partial^{\nu}}{\partial u^{\nu}}R_m(u,s)=\frac{\partial^{\nu}}{\partial u^{\nu}}\left(G(u,s)^{m}\right)-\sum_{j=m}^{\kappa}(-1)^{\nu}j^{(\nu)}E_{j,m}\frac{s^{j}}{u^{j+\nu}}.
  \end{equation*}
  To conclude the proof, we invoke Lemma \ref{SimpleG_Lemma} if $\nu=0$, and Lemma \ref{SimpleGDer_Lemma}, otherwise. 
\end{proof}
\begin{lemma}\label{I_2_Lemma}
  Suppose that $\Re s=1$, and that $u\ge1$. For integers $1\le m\le\ell$ and $0\le\nu\le\kappa$,
  \begin{equation*}
    \left|\frac{\partial^\nu}{\partial u^\nu}\left(s\frac{\partial}{\partial s}R_m(u,s)\right)\right|\ll_{m,\nu,\kappa}
    \begin{cases}
      \dfrac{|s|^{\kappa+1}}{u^{\kappa+1+\nu}}&\text{if $|s|<u$},\\
      \dfrac{|s|^{\kappa}}{u^{\kappa+\nu}}\max\left(1,(1+\log|s|)^{m-2}\right)&\text{if $|s|>u$.}
    \end{cases}
  \end{equation*}
\end{lemma}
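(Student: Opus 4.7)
The plan is to follow the proof strategy of Lemma \ref{R_mDerLemma}, splitting on whether $|s|<u$ or $|s|>u$ and inserting the extra operator $s\frac{\partial}{\partial s}$ before the $u$-derivatives. A key identity throughout is
\begin{equation*}
s\frac{\partial}{\partial s}G(u,s)^{m}=m\,G(u,s)^{m-1}\bigl(1-e^{-s/u}\bigr),
\end{equation*}
obtained from differentiating \eqref{G_Def} in $s$, as already used in Lemma \ref{GMixedDer_Lemma}. Together with the decomposition of $R_m$ afforded by \eqref{R_head} and \eqref{R_tail}, this reduces the problem to handling derivatives of $G^{m-1}$, of $1-e^{-s/u}$, and of an explicit polynomial in $s/u$.

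For $|s|<u$, I would apply $s\frac{\partial}{\partial s}$ termwise to the absolutely convergent tail \eqref{R_tail} to obtain
\begin{equation*}
s\frac{\partial}{\partial s}R_m(u,s)=\sum_{j=\kappa+1}^{\infty}jE_{j,m}\left(\frac{s}{u}\right)^{j},
\end{equation*}
then differentiate $\nu$ times in $u$. Factoring out $s^{\kappa+1}/u^{\kappa+1+\nu}$ leaves a power series in $s/u$ whose coefficients are controlled by the wasteful bound \eqref{wasteful_E_bound}; the ratio test then shows that this series is uniformly bounded on $|s/u|\le 1$, yielding the first case of the claim.

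For $|s|>u$, I would use the identity in \eqref{R_head} together with the key identity above to write
\begin{equation*}
s\frac{\partial}{\partial s}R_m(u,s)=m\,G(u,s)^{m-1}\bigl(1-e^{-s/u}\bigr)-\sum_{n=m}^{\kappa}nE_{n,m}\left(\frac{s}{u}\right)^{n}.
\end{equation*}
The polynomial subtraction, after $\nu$ derivatives in $u$, is bounded term by term by $(|s|/u)^{n}/u^{\nu}$, and since $|s|/u>1$ and $n\le\kappa$, the whole sum is dominated by $|s|^{\kappa}/u^{\kappa+\nu}$. For the principal term I apply Leibniz's formula to distribute the $\nu$ derivatives between $G(u,s)^{m-1}$ and $1-e^{-s/u}$; the $u$-derivatives of $1-e^{-s/u}$ are bounded by \eqref{Easy_exp_der}, while the $u$-derivatives of $G^{m-1}$ are bounded by Lemma \ref{SimpleG_Lemma} when undifferentiated and by Lemma \ref{GPowerDer_Lemma} with $m-1$ in place of $m$ when differentiated at least once, producing the factor $(1+\log|s|)^{m-2}$ in the latter case. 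Using the hypothesis $\nu\le\kappa$ together with $|s|/u>1$ to absorb both $(|s|/u)^{\nu}$ and $(|s|/u)^{\nu-1}$ into $(|s|/u)^{\kappa}$, the contributions combine to the stated bound.

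The main technical obstacle is the accountancy of the $(1+\log|s|)$ exponent for small $m$. The case $m=1$ has to be handled separately, since $G^{m-1}\equiv 1$ collapses the Leibniz sum and the principal term reduces to $\partial_{u}^{\nu}(1-e^{-s/u})$, giving no logarithmic factor at all (which is consistent with $\max(1,(1+\log|s|)^{-1})=1$). For $m\ge 2$ the Leibniz calculation proceeds as sketched, with Lemma \ref{GPowerDer_Lemma} providing the governing logarithmic power; the remaining work is a careful but otherwise routine combination of the estimates already established earlier in this section.
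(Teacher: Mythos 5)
Your proof follows the same route as the paper's: for $|s|<u$ you differentiate the tail series \eqref{R_tail} termwise and control it with \eqref{wasteful_E_bound} and the ratio test; for $|s|>u$ you rewrite $s\frac{\partial}{\partial s}(G^m)=mG^{m-1}(1-e^{-s/u})$, apply Leibniz in $u$, bound the factors with Lemmas \ref{SimpleG_Lemma}, \ref{GPowerDer_Lemma}, \ref{GMixedDer_Lemma} (equivalently \eqref{Easy_exp_der}), and bound the polynomial correction trivially. The paper's proof is organized identically, writing the Leibniz product as $\partial_u^{\nu-j}\bigl(s\partial_s G\bigr)\cdot \partial_u^{j}\bigl(mG^{m-1}\bigr)$, which is precisely your factorization.

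One subtlety that neither your sketch nor the paper's resolves cleanly: in the Leibniz term with $j=0$ the factor $G^{m-1}$ is undifferentiated, and Lemma \ref{SimpleG_Lemma} gives $\bigl(1+\log(|s|/u)\bigr)^{m-1}$ for it — one power of $\log$ more than the $\bigl(1+\log|s|\bigr)^{m-2}$ in the statement. (Concretely, with $\kappa=\nu=0$, $m=2$, and $|s|\gg u$ one has $s\partial_s R_2=2G(1-e^{-s/u})$, which grows like $\log(|s|/u)$, whereas the stated bound is $O(1)$.) So the exponent should read $m-1$; this is harmless for Lemma \ref{L_Error_Derivatives}, which has logarithmic headroom to spare, but it is worth noting that your phrase "Lemma \ref{GPowerDer_Lemma} providing the governing logarithmic power" overlooks the undifferentiated $j=0$ term, where the governing power comes from Lemma \ref{SimpleG_Lemma} instead.
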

\begin{proof}
  When $|s|<u$ we use equation \eqref{R_tail} to observe that
\begin{align*}
  \frac{\partial^{\nu}}{\partial u^{\nu}}\left(s\frac{\partial}{\partial s}R_m(u,s)\right)=\sum_{j=\kappa+1}^{\infty}(-1)^{\nu}j^{(\nu)}E_{j,m}\ j\frac{s^{j}}{u^{j+\nu}}.
\end{align*}
The bound follows since the series,
\begin{equation*}
  \sum_{j=\kappa+1}^{\infty}j^{(\nu)}|E_{j,m}|j,
\end{equation*}
converges by the ratio test, again using \eqref{wasteful_E_bound}.

On the other hand, when $|s|>u$, we apply the Leibniz rule to \eqref{R_head} so that
\begin{multline*}
  \frac{\partial^{\nu}}{\partial u^{\nu}}\left(s \frac{\partial}{\partial s}R_m(u,s)\right)=\sum_{j=0}^{\nu}\binom{\nu}{j}\frac{\partial^{\nu-j}}{\partial u^{\nu-j}}\left(s \frac{\partial}{\partial s}G(u,s)\right)\frac{\partial^{j}}{\partial u^{j}}\left(m G(u,s)^{m-1}\right)\\-\sum_{j=m}^{\kappa}(-1)^{\nu}j^{(\nu)}E_{j,m}\ j\frac{s^j}{u^{j+\nu}}.
\end{multline*}
Using Lemma \ref{GPowerDer_Lemma} and Lemma \ref{GMixedDer_Lemma} on the first sum and trivially bounding the second sum concludes the proof.
\end{proof}
\section{Concluding Remarks}
Although our main interest in this paper is $K_\ell(u)$, it is possible to further generalize our asymptotic to $K_\ell(u,\kappa)$ with very little effort. These integrals arise when one investigates the mean value of the generalized divisor function, $d_\kappa(n)$, over the smooth numbers, $S(x,y)$. For instance, letting $u=\log x/\log y$, it was shown in \cite{Franze} that as $x,y\rightarrow\infty$ with $u$ bounded,
\begin{equation*}
  \sum_{n\in S(x,y)}d_k(n)\sim\rho_\kappa(u)\ x\log^{\kappa-1}y,
\end{equation*}
where
\begin{equation*}
  \rho_\kappa(u)=\sum_{0\le\ell<u}\frac{(-\kappa)^{\ell}}{(\kappa-1)!}K_\ell(u,\kappa-1).
\end{equation*}
The Dickman function corresponds to the case $\kappa=1$ above. Thus, we take this opportunity to prove
\begin{theorem}\label{Secondary}
  For each integer $\kappa,J\ge0$ and $\ell\ge1$, provided $u\ge\ell$,
  \begin{multline*}
    K_{\ell}(u,\kappa)=\sum_{m=0}^{\ell}\sum_{n=m}^{\kappa}\sum_{r=0}^{\ell-m}\frac{(-1)^{r}\kappa!E_{n,m}}{m!(\ell-m-r)!}C_{r,\kappa-n}u^{\kappa-n}\log^{\ell-m-r}u\\
    +\sum_{j=1}^{J}\sum_{m=0}^{\ell}\sum_{r=0}^{\ell-m}\frac{(-1)^{r}\kappa!E_{\kappa+j,m}}{m!(\ell-m-r)!}C_{r}\left(\log^{\ell-m-r}u\right)^{(j)}+O_{\kappa,\ell,J}\left(\frac{\log^{\ell}eu}{u^{J+1}}\right).
  \end{multline*}
\end{theorem}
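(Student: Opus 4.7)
The plan is to mimic the strategy used to derive Theorem \ref{Main}, but to descend only $J$ levels rather than all $\kappa$ of them. Specifically, I would apply the decomposition \eqref{MT_ET_Identity} not to $K_\ell(u,\kappa)$ but to $K_\ell(u,\kappa+J)$, obtaining
\begin{equation*}
K_\ell(u,\kappa+J)=\widetilde{K}_\ell(u,\kappa+J)+\mathscr{E}_\ell(u,\kappa+J),
\end{equation*}
and then differentiate both sides $J$ times in $u$. Iterating Lemma \ref{L_diff} identifies the $J$-th derivative of the left-hand side as $(\kappa+J)_J\,K_\ell(u,\kappa)$, where $(\kappa+J)_J=(\kappa+J)!/\kappa!$.

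For the main term on the right, I would invoke Lemma \ref{DiffLemma2} with $\nu=J$ and with $\kappa$ replaced by $\kappa+J$ (the hypothesis $1\le\nu\le\kappa$ now reads $1\le J\le\kappa+J$, so is automatic for $J\ge1$), producing
\begin{equation*}
\widetilde{K}_\ell^{(J)}(u,\kappa+J)=(\kappa+J)_J\,\widetilde{K}_\ell(u,\kappa)+\sum_{j=0}^{J-1}\sum_{m=0}^{\ell}\sum_{r=0}^{\ell-m}\frac{(-1)^{r}(\kappa+J)!}{m!(\ell-m-r)!}E_{\kappa+J-j,m}C_{r,0}\left(\log^{\ell-m-r}u\right)^{(J-j)}.
\end{equation*}
After dividing through by $(\kappa+J)_J$, the leading piece is exactly the first triple sum of Theorem \ref{Secondary}, by the explicit form \eqref{AltFormMainK} of $\widetilde{K}_\ell(u,\kappa)$. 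Reindexing $j\mapsto J-j$ in the correction sum and using $C_{r,0}=C_r$ together with $(\kappa+J)!/(\kappa+J)_J=\kappa!$, this correction takes exactly the shape of the second triple sum in the theorem. The degenerate case $J=0$ is simply the Theorem of Franze sketched earlier and needs no differentiation.

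For the error, I would apply Lemma \ref{L_Error_Derivatives} with $\nu=J$ and $\kappa\mapsto\kappa+J$, whose hypothesis $J\le\kappa+J$ is again automatic; this yields $\mathscr{E}_\ell^{(J)}(u,\kappa+J)\ll\log^\ell eu/u^{J+1}$, and dividing by the constant $(\kappa+J)_J$ preserves the bound. The only technical caveat is that the decomposition \eqref{MT_ET_Identity} was made explicit only under $\kappa+J\ge\ell$, so the argument above is valid directly only in that range. For smaller $J$, I would adapt the truncation observation made after the proof of Theorem \ref{Main}: pick any $J'\ge\max(J,\ell-\kappa)$, invoke the result just proved with $J'$ in place of $J$, and truncate the $j$-sum back to $j\le J$. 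Each dropped term at index $j\ge J+1$ carries a factor $(\log^{\ell-m-r}u)^{(j)}$, which by Leibniz's rule decays like $\log^{\ell}u/u^{j}$, and is thus absorbed into $O(\log^\ell eu/u^{J+1})$. I do not anticipate any serious obstacle here: essentially all of the analytic work has been done in the preceding lemmas, and what remains is the bookkeeping with indices and factorial factors.
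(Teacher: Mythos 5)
Your proposal is correct and follows essentially the same route as the paper: the paper likewise replaces $\kappa$ by $\kappa+\nu$ in \eqref{MT_ET_Identity}, differentiates $\nu$ times, applies Lemmas \ref{L_diff}, \ref{DiffLemma2}, and \ref{L_Error_Derivatives}, divides by $(\kappa+\nu)_\nu$, and sets $\nu=J$. Your explicit reindexing $j\mapsto J-j$, the factorial identity $(\kappa+J)!/(\kappa+J)_J=\kappa!$, and the truncation argument for $\kappa+J<\ell$ are all sound and in fact make the range of validity slightly more transparent than the paper's own brief proof.
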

\begin{proof}
  Replace $\kappa$ by $\kappa+\nu$ in \eqref{MT_ET_Identity}, so that
  \begin{equation}\label{GenKExpansion}
    \frac{K_{\ell}^{(\nu)}(u,\kappa+\nu)}{(\kappa+\nu)_\nu}=\frac{\widetilde{K}_{\ell}^{(\nu)}(u,\kappa+\nu)}{(\kappa+\nu)_\nu}+\frac{\mathscr{E}_{\ell}^{(\nu)}(u,\kappa+\nu)}{(\kappa+\nu)_\nu}.
  \end{equation}
  Repeated use of Lemma \ref{L_diff} shows that the left-hand side is $K_\ell(u,\kappa)$, while Lemma \ref{L_Error_Derivatives} shows that
  \begin{equation*}
    \mathscr{E}_{\ell}^{(\nu)}(u,\kappa+\nu)\ll_{\kappa,\ell,\nu}\frac{\log^{\ell}eu}{u^{\nu+1}}.
  \end{equation*}
  Therefore, after applying Lemma \ref{DiffLemma2} to $\widetilde{K}_{\ell}^{(\nu)}(u,\kappa+\nu)/(\kappa+\nu)_\nu$, equation \eqref{GenKExpansion} yields
  \begin{equation*}
    K_\ell(u,\kappa)=\widetilde{K}_\ell(u,\kappa)+\sum_{j=1}^{\nu}\sum_{m=0}^{\ell}\sum_{r=0}^{\ell-m}\frac{(-1)^{r}\kappa!E_{\kappa+j,m}}{m!(\ell-m-r)!}C_r \left(\log^{\ell-m-r} u\right)^{(j)}+O_{\kappa,\ell,\nu}\left(\frac{\log^{\ell} eu}{u^{\nu+1}}\right).
  \end{equation*}
  Substituting the expression in \eqref{AltFormMainK} for $\widetilde{K}_\ell(u,\kappa)$ and letting $\nu=J$ then completes the proof.
\end{proof}
\section*{Acknowledgement}
I would like to thank the American Institute of Mathematics for providing me the opportunity to participate in the workshop on Bounded gaps between primes, and fostering the lively intellectual atmosphere that inspired this manuscript.
\bibliographystyle{abbrv}
\bibliography{\jobname} 
\end{document}